\numberwithin{equation}{section}
\newif\ifdraft\drafttrue
\newcommand{\m}{{m}}
\newcommand{\n}{{n}}
\newcommand\mr{M_{m,n}}
\newcommand\ba{badly approximable}
\newcommand\ssm{\smallsetminus}
\newcommand\eq[2]{{\ifdraft{\ \tt [#1]}\else\ignorespaces\fi}\begin{equation}\label{eq:#1}{#2}\end{equation}}
\newcommand {\equ}[1]     {\eqref{eq:#1}}
\newcommand{\Q}{{\mathbb {Q}}}
\newcommand{\R}{{\mathbb {R}}}
\newcommand{\Z}{{\mathbb {Z}}}
\newcommand{\N}{{\mathbb {N}}}
\newcommand{\T}{{\mathbb{T}}}
\newcommand{\rd}{\mathbb{R}^d}
\newcommand{\rn}{\mathbb{R}^n}
\newcommand{\GL}{\operatorname{GL}}
\newcommand{\SL}{\operatorname{SL}}
\newcommand{\ggm}{G/\Gamma}
\newcommand{\diam}{\operatorname{diam}}
\newcommand{\dist}{\operatorname{dist}}
\newcommand{\diag}{\operatorname{diag}}
\newcommand{\supp}{\operatorname{supp}}
\newcommand{\const}{\operatorname{const}}
\newcommand {\ignore}[1]  {}
\newcommand\cic{C^\infty_{comp}}
\newcommand{\df}{{\, \stackrel{\mathrm{def}}{=}\, }}
\newcommand{\x}{{\mathbf{x}}}
\newcommand{\0}{{\mathbf{0}}}
\newcommand{\vv}{{\bf{v}}}
\newcommand{\vu}{{\bf u}}
\newcommand{\y}{{\mathbf y}}
\newcommand{\vm}{\mathbf{m}}
\newcommand{\eM}{\operatorname{M}}
\newcommand{\Bad}{\mathrm{Bad}}
\newcommand{\p}{{\bf p}}
\newcommand{\q}{{\mathbf{q}}}
\DeclareMathOperator{\codim}{codim}
\DeclareMathOperator{\spn}{span}
\DeclareMathOperator{\proj}{proj}
\DeclareMathOperator{\covol}{covol}
\newcommand{\vre}{\varepsilon}
\newcommand{\ca}{{\mathcal A}}
\newcommand{\cl}{{\mathcal L}}
\newcommand{\X}{\mathbf{X}}
\newcommand{\Y}{\mathbf{Y}}
\newtheorem{thm}{Theorem}[section]
\newtheorem{lem}[thm]{Lemma}
\newtheorem{prop}[thm]{Proposition}
\newtheorem{cor}[thm]{Corollary}
\newtheorem{remark}[thm]{Remark}
\newtheorem{defn}[thm]{Definition}
\begin{document}
\title[Dimension of uniformly B.A. systems of linear forms]{Dimension estimates for sets of 
uniformly badly approximable systems of linear forms}
\author{Ryan Broderick and Dmitry Kleinbock} 

\address{Department of Mathematics, Brandeis University, Waltham MA} 
\email{kleinboc@brandeis.edu}

\address{Department of Mathematics, Northwestern University, Evanston, IL} 
\email{ryan@math.northwestern.edu}

\begin{abstract}
The set of badly approximable $\m \times \n $ matrices is known
to have Hausdorff dimension $\m\n $. 
Each such matrix comes with its own approximation constant $c$,
and one can ask for the dimension of the set of badly approximable matrices
with approximation constant greater than or equal to some fixed $c$.
In the one-dimensional case, a very precise answer to this question is known. In this note,
we obtain upper and lower bounds in higher dimensions. 
%DK
The lower bounds are established via the technique of Schmidt games, while for the upper bound we use homogeneous dynamics methods, namely exponential mixing of flows on the space of lattices.
%, and use our techniques to derive an analogous lower bound for so-called `escaping sets.'
\end{abstract}

\maketitle

\section{Introduction}
For positive integers $\m $ and $\n $, 
let $\eM_{\m \times \n }$ denote the set of $\m \times \n $ matrices 
%D
with real entries.
Each $A\in\eM_{\m \times \n }$ defines a linear transformation $\q\mapsto A\q $ from 
$\R^\n $ to $\R^\m $.
The components of this linear transformation can be regarded as a system of $\m $ linear forms in $\n $ variables. We will choose norms  $\|\cdot\|$  on $\R^n$ and $\R^m$, which will later without loss of generality be taken to be supremum norms.

\begin{defn}
\label{BAdef}
A matrix $A \in \eM_{\m \times \n }$ is said to be a \emph{badly approximable system of linear forms} if
there exists
$c > 0$ such that for all $\q \in \Z^\n  \smallsetminus \{0\}$ and $\p \in \Z^\m $,
\begin{equation}
\label{symmetricBA}
\|\q\|^\n\|A\q  - \p\|^\m \ge c.
\end{equation}
We write $\Bad_{\m ,\n }$ for the set of all badly approximable systems of linear forms.
\end{defn}
%DK
%\begin{remark}
%\label{asymmetricBA}
%Note that {\rm (\ref{symmetricBA})} holds if and only if $\|A\q  - \p\| \ge c^{1/\m}\|\q\|^{-n/m}$.
%\end{remark}
\noindent When $\n  = 1$, the elements of
this set are referred to as \textsl{badly approximable vectors} or, in the case $\m = \n  =1$,
\textsl{badly approximable numbers}.  

It was shown by W.\ Schmidt in \cite{S2} that 
$\dim(\Bad_{\m ,\n }) = \m \n $.\footnote{Here and throughout, $\dim$ stands for Hausdorff dimension.}
Note however that, since the constant $c$ in Definition \ref{BAdef} is allowed to depend on $A$,
$\Bad_{\m ,\n }$ is naturally written as a union over $c > 0$ of the sets
$$\Bad_{\m ,\n }(c) = \{A: \|\q\|^\n\|A\q  - \p\|^\m \ge c \text{ for all } \q \in \Z^\n  \smallsetminus \{0\}\text{ and }\p \in \Z^\m \}.$$
%DK
%Clearly the definition of sets $\Bad_{\m ,\n }(c)$ is sensitive to the choice of norms, while that of $\Bad_{\m ,\n }$ is not. 
Schmidt's result can be restated as $$\sup_{c>0} \dim\big(\Bad_{\m ,\n }(c)\big) =  \lim_{c\to 0} \dim\big(\Bad_{\m ,\n }(c)\big) = \m\n  \,.$$
The asymptotics 
%D
of the right hand side of the above expression as $c \to 0$ however
%D
 is less well-studied.
It is more convenient in this context to discuss the codimension, which tends to $0$ as $c \to 0$,
so for a subset $S \subset \R^d$, write $\codim(S) = d - \dim (S)$. We wish to study
the rate at which $\codim\big(\Bad_{\m ,\n }(c)\big)$ tends to $0$ as $c \to 0$.
In the case $\m  =\n  = 1$,  J.\ Kurzweil proved in \cite{Ku} that the decay of 
$\codim\big(\Bad_{1,1}(c)\big)$
is linear. More precisely the following bounds are obtained for all sufficiently small $c > 0$:
$$.25c \le \codim\big(\Bad_{1,1}(c)\big) \le .99c.$$
Later, D.\ Hensley improved these estimates in \cite{H}. There he proves that
if $E_k$ is the set of real numbers whose continued fraction expansion involves
only partial quotients $\le k$, then 
$$\dim(E_k) = 1 - \frac{6}{\pi^2 k} - 72\frac{\log k}{\pi^4 k^2}+ O(1/k^2)\,.$$
But $\Bad_{1,1}\left(\frac{1}{k}\right) \subset E_k \subset \Bad_{1,1}\left(\frac{1}{k+2}\right)$
(see for example Theorem 1.9 in \cite{Bu}),
so we obtain more precise asymptotic dimension estimates for $\Bad_{1,1}(c)$.
However the authors are not aware of any nontrivial estimates in the literature
for $\codim\big(\Bad_{\m ,\n }(c)\big)$ when $\m $ or $\n $ is at least $2$.
In this article, we obtain upper and lower dimension bounds, though these estimates
do not align, so further research is needed to uncover the precise asymptotics in higher dimensions.
Specifically, we have the following:

\begin{thm}
\label{thmlower}
Given $\m , \n  \in \N$, there exist $p = p(\m ,\n ) > 0$ and $k_1 =k_1(\m ,\n ) > 0$ such that 
$$\codim\big(\Bad_{\m ,\n }(c)\big) \le k_1\frac{c^{1/p}}{\log(1/c)}\,.$$
In particular, one can take $p(\m ,1 ) = 2m$ and $p(1 ,\n ) = 2n^2$.
%In the case $n=1$, we may take $p=2m$.
%In the case $\n =1$, we may take $p =2$ independent of $\m $.
\end{thm}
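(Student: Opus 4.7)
The plan is to prove the lower bound on $\dim\big(\Bad_{\m,\n}(c)\big)$ via a Schmidt-type game played on $\eM_{\m\times\n}(\R)\cong\R^{\m\n}$, obtaining a quantitative codimension bound matching the claimed rate $c^{1/p}/\log(1/c)$. The starting observation is that the set of $A\in\eM_{\m\times\n}(\R)$ failing the $\Bad_{\m,\n}(c)$ inequality for a fixed $(\p,\q)\in\Z^\m\times(\Z^\n\ssm\{0\})$ is the \emph{forbidden slab}
$$F_{\p,\q}(c)=\big\{A\in\eM_{\m\times\n}(\R) : \|A\q-\p\|^\m < c/\|\q\|^\n\big\},$$
which is a tubular neighborhood of a codimension-$\m$ affine subspace of transverse width $\asymp c^{1/\m}/\|\q\|^{(\n+\m)/\m}$. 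Thus $\Bad_{\m,\n}(c)=\eM_{\m\times\n}(\R)\ssm\bigcup_{\p,\q}F_{\p,\q}(c)$, and Alice's task in the game is, at each round, to avoid these slabs.

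Next I would run a Schmidt-type game with contraction ratio $\delta=\delta(c)$ (to be chosen) in which, starting from a ball $B_{k-1}$ of radius $\rho_{k-1}$, Alice chooses a sub-ball $B_k$ of radius $\delta\rho_{k-1}$ avoiding every $F_{\p,\q}(c)$ with $\|\q\|$ in a dyadic range keyed to $\rho_{k-1}$ (so that slabs in this range are exactly those capable of obstructing $B_k$). For fixed $\q$, the active slabs $F_{\p,\q}(c)\cap B_{k-1}$ form a parallel family of transverse width $\asymp c^{1/\m}/\|\q\|^{(\n+\m)/\m}$ and transverse separation $\asymp 1/\|\q\|$, and for each $\q$ only $O(1)$ choices of $\p$ are relevant. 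The main technical step, and the principal obstacle, is a \emph{slab-density estimate} bounding the fraction of $B_{k-1}$ blocked, summed over $\q$ in the relevant range and the associated $\p$, by $Cc^{1/p}$ for an appropriate exponent $p=p(\m,\n)$.

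Once the slab-density estimate is in hand, a standard mass-distribution argument on the nested family of Alice's balls produces
$$\dim\big(\Bad_{\m,\n}(c)\big)\;\ge\;\m\n-\frac{\log\big(1/(1-Cc^{1/p})\big)}{\log(1/\delta)}\;\approx\;\m\n-\frac{Cc^{1/p}}{\log(1/\delta)},$$
and choosing $\delta\asymp c^{1/p}$ (or any scale with $\log(1/\delta)\asymp\log(1/c)$) yields $\codim\big(\Bad_{\m,\n}(c)\big)\le k_1 c^{1/p}/\log(1/c)$, as claimed. The refined exponents $p(\m,1)=2\m$ and $p(1,\n)=2\n^2$ should arise from sharpening the slab-density estimate in the two distinguished cases: when $\n=1$ the slabs degenerate into $\ell^\infty$-balls around rationals $\p/q$, making the count essentially one-dimensional and streamlined; when $\m=1$ the slabs are genuine codimension-one strips, and one must control the clustering of many near-parallel strips in a single parent ball, which is what forces the weaker $\n^2$-type rate.

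\emph{Main obstacle.} Everything hinges on the slab-density bound, namely proving that the forbidden slabs cover a fraction of each parent ball bounded by the correct power $c^{1/p}$, uniformly in the center and radius of the parent ball and summed over the whole relevant range of $\|\q\|$. Both the deduction of the codimension estimate from this bound via the Cantor-style nested construction and the final optimization of $\delta$ are routine once the sharp combinatorial count of active slabs is established.
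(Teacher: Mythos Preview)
Your overall architecture---run a Schmidt-type game, avoid forbidden regions at each stage, then read off a codimension bound of the form $\log(1-\text{loss})/\log(\text{contraction})$---is exactly the shape of the paper's argument. But the proposal, as written, defers the entire content to the ``slab-density estimate,'' and the heuristics you offer for why the correct exponents emerge do not match the mechanisms that actually produce them.

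For $\n=1$: the slabs are indeed balls around rationals $\p/q$ in $\R^{\m}$, but the count is \emph{not} ``essentially one-dimensional.'' The reason a single move suffices per scale is Davenport's \emph{Simplex Lemma}: all rationals in a given ball with denominator below a threshold depending on the radius lie on a single affine hyperplane. The paper plays the \emph{hyperplane absolute} game, so Alice deletes one hyperplane neighborhood per round and handles all relevant rationals at once. This is what forces the quadratic relation $c^{1/\m}\asymp\beta^2$ (one $\beta$ from the hyperplane-neighborhood width, one from the denominator jump between consecutive scales), hence $p(\m,1)=2\m$. A naive volume count of slabs, without the Simplex Lemma, does not obviously yield this.

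For $\m=1$: the paper does \emph{not} analyze ``clustering of near-parallel strips.'' It invokes Khintchine's transference principle, which gives $\Bad_{1,\n}(c)\supset\Bad_{\n,1}(c')$ with $c'\asymp c^{1/\n}$, and then applies the $\n=1$ result. This is where $p(1,\n)=2\n\cdot\n=2\n^2$ comes from: a factor $2\n$ from the vector case plus a factor $\n$ from transference. Your proposed direct attack on strips might work, but it is a different (and harder) route, and you give no indication of how to carry it out.

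For general $\m,\n$: the paper again uses the hyperplane game, via a quantitative version of the Schmidt/BFS winning strategy, and this requires substantial additional machinery that your proposal does not touch.

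In short: the deduction of the codimension bound from a winning strategy is routine and you have that right, but the winning strategy itself---Simplex Lemma plus hyperplane game for $\n=1$, transference for $\m=1$---is the proof, and your proposal does not contain it.
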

\noindent The case $\max(m,n) > 1 $ is more involved; an explicit estimate for $p$ in that  case  is given in Theorem \ref{linearforms}.

%where 
%$$
%p = p(m,n) = \begin{cases} 2m\text{ if } n = 1\\ 2n^2\text{ if } m = 1\\  [\m (\m +\n ) + \n (\m +\n )^3]\max\left\{\frac{4\n +1}{\m }, \frac{4\m +1}{\n}\right\}\text{ if } \max(m,n) > 1 1\end{cases}
%$$
%In the case $n=1$, we may take $p=2m$.
%In the case $\n =1$, we may take $p =2$ independent of $\m $.

\begin{thm}
\label{upper}
Given $\m , \n  \in \N$, there exists $k_2 = k_2(\m ,\n )  > 0$ such that
$$\codim\big(\Bad_{\m ,\n }(c)\big) \ge k_2 \frac{c}{\log(1/c)}\,.$$
\end{thm}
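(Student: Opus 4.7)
My plan is to combine the Dani correspondence with an iterated covering argument driven by exponential mixing of the diagonal flow on $X = \SL_{m+n}(\R)/\SL_{m+n}(\Z)$. Let $g_t = \diag(e^{t/m},\ldots,e^{t/m},e^{-t/n},\ldots,e^{-t/n})$ and, for $A \in \eM_{m,n}$, write $u_A$ for the unipotent $(m+n)\times(m+n)$ matrix with identity on the diagonal and $A$ in the upper-right $m\times n$ block. A standard minimization with the supremum norm shows that $A \in \Bad_{m,n}(c)$ if and only if the forward orbit $\{g_t u_A \Z^{m+n} : t \ge 0\}$ lies in the compact set $K_\epsilon = \{\Lambda \in X : \|v\| \ge \epsilon \text{ for every } v \in \Lambda \smallsetminus \{0\}\}$ with $\epsilon \asymp c^{1/(m+n)}$. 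By Mahler's criterion and Siegel's formula, $\mu_X(X \smallsetminus K_\epsilon) \asymp \epsilon^{m+n} \asymp c$, so the forbidden cusp region has measure of the right order.

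To bound the Hausdorff dimension of $E_\epsilon := \{A \in B_0 : g_t u_A \Z^{m+n} \in K_\epsilon \ \forall t \ge 0\}$ for a fixed bounded ball $B_0 \subset \eM_{m,n}$, I will construct an efficient cover stage by stage. Let $\lambda = (m+n)/(mn)$ be the coordinatewise expansion rate of $g_t$ on the horospherical subgroup $\{u_A\}$, and fix a time step $\Delta > 0$ to be chosen. At stage $k$, partition $B_0$ into boxes of diameter $r_k = e^{-k\Delta\lambda}$, and let $N_k$ be the number of these boxes that still contain some $A$ with $g_{j\Delta}u_A\Z^{m+n} \in K_\epsilon$ for every $j = 0,1,\dots,k$. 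Each surviving box pushes under $g_{k\Delta}$ to a roughly unit-size piece of a horospherical leaf. Applying exponential mixing in the form
$$\left|\int_X f(g_\Delta x) h(x)\,d\mu_X(x) - \int f\,d\mu_X\int h\,d\mu_X\right| \le C\,e^{-\alpha\Delta}\|f\|_{C^s}\|h\|_{C^s},$$
with $f$ a $C^s$-smoothing of $\chi_{X \smallsetminus K_\epsilon}$ on scale $\epsilon$ (so $\|f\|_{C^s} \le C\epsilon^{-s}$) and $h$ the normalized density of surviving boxes transported to $X$ via a local transversal, produces a Markov-type recurrence
$$N_{k+1} \le N_k\bigl(\mu_X(K_\epsilon) + C\,e^{-\alpha\Delta}\epsilon^{-s'}\bigr)$$
for some $s' > 0$ depending on $s$ and $m+n$.

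Choosing $\Delta \asymp \log(1/\epsilon) \asymp \log(1/c)$ so that the mixing error is dominated by, say, $\tfrac{1}{2}\mu_X(X \smallsetminus K_\epsilon)$, iteration yields $N_k \le r_k^{-mn}\exp\bigl(-c_1 k\,\mu_X(X \smallsetminus K_\epsilon)\bigr)$ for some $c_1>0$. Since $\log(1/r_k) = k\Delta\lambda$,
$$\dim E_\epsilon \le \limsup_{k\to\infty}\frac{\log N_k}{\log(1/r_k)} \le mn - \frac{c_1\,\mu_X(X \smallsetminus K_\epsilon)}{\Delta\lambda} \asymp mn - \frac{c}{\log(1/c)},$$
which gives the claimed codimension bound. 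The main obstacle is executing the mixing step rigorously: one must construct a $C^s$-smoothing of $\chi_{K_\epsilon}$ with a controlled trade-off between $L^1$-error and Sobolev norm, promote the indicator of the surviving boxes to a genuinely $C^s$-regular density on $X$ (for instance by convolving with a bump in the stable direction and tracking the Jacobian of $g_{k\Delta}$), and control cumulative errors over $k$ iterations. The $\log(1/c)$ in the denominator is precisely the price of making the exponential mixing rate beat the regularization penalty $\epsilon^{-s}$, forcing $\Delta$ to grow like $\log(1/\epsilon)$.
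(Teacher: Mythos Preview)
Your outline is correct and is essentially the paper's approach: Dani correspondence, then an iterated covering argument driven by effective equidistribution of expanding horospherical translates under $g_t$, with the time step $\Delta\asymp\log(1/c)$ chosen so that the exponential mixing rate beats the Sobolev regularization penalty, yielding exactly the $c/\log(1/c)$ codimension. The paper sidesteps the obstacle you flag (promoting the indicator of surviving boxes to a smooth density on $X$) by iterating box-by-box rather than globally --- at each stage it recenters on a single surviving subcube, replaces the base lattice by $g_tu_{A_1}\Lambda\notin U_\delta$, and reapplies the \emph{same} fixed smooth bump on $H$ (Proposition~\ref{expdecaysets}); thus only the cusp indicator $1_{U_\delta}$ and one $H$-bump ever need smoothing, handled once and for all in Lemmas~\ref{Sobapprox2}--\ref{Sobapprox3}, and your displayed recurrence should carry the subdivision factor $e^{mn\lambda\Delta}$ (as your next line implicitly does).
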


Combining the two theorems we have
%D
\eq{2sided}{\m\n - k_1\frac{ c^{1/p}}{\log(1/c)} 
			\le  \dim\big(\Bad_{\m ,\n }(c)\big) \le \m\n - k_2 \frac{c}{\log(1/c)}\,.}

After this work was completed we became aware of a recent preprint \cite{W} of S.~Weil,
where he  obtained bounds on $\dim\big(\Bad_{\m,1}(c)\big)$ 
(the case of badly approximable vectors). 
Namely, it is proved there that for constants $k_1, k_2 > 0$ depending on $m$ one has
$$m - k_1\frac{c^{1/2m}}{\log(1/c)} \le \dim\big(\Bad_{\m,1}(c)\big) \le m - k_2\frac{c^{m+1}}{\log(1/c)}\,.$$
Comparing with \equ{2sided} and using the fact that one can take $p(\m,1) = 2m$ (Corollary \ref{codimbadd}), one sees that our lower bound agrees with Weil's and our upper bound is better.
%our own Theorems \ref{upper} and \ref{thmlower}, which yield
%$$m - \frac{k_1c^{1/2m}}{\log(1/c)} \le \dim(\Bad_{\m,1}(c)) \le m - \frac{k_2c}{\log(1/c)}.$$
Note that the methods of \cite{W} work in a more general set-up and can be applied also in other settings,
including badly approximable vectors with weights and bounded geodesics in 
hyperbolic manifolds.

Our lower bounds %on $\dim\big(\Bad_{\m ,\n }(c)\big)$ in Theorem \ref{thmlower}
are obtained, like many lower dimension bounds of diophantine sets, using Schmidt's game. More precisely, we are employing a variant of Schmidt's game, the so-called hyperplane absolute game, see \S \ref{game}.
This is similar in spirit but still different from  the approach of \cite{W}. Consequently, our argument can be generalized to estimate the dimension of 
other sets whose union has the hyperplane absolute winning (HAW) property, such as the set of points whose trajectories under a total endomorphism miss a fixed open set.
% and are naturally written as a union.
%For example, one can obtain in this way asymptotic estimates for the dimension
%of escaping sets with a fixed constant. (
See \cite{BFS} for a discussion of those sets
and a proof that they are HAW, as well as   \cite{AN} and \cite[Theorem 3.17]{W}, where dimension
bounds are obtained using other methods. It is also possible, following the ideas from  \cite{BFS} and similarly to \cite[Theorem 3.1]{W}, to produce lower  estimates for the dimension of the intersection of the aforementioned sets with fractals supporting    absolutely decaying  measures satisfying a power law (see \cite{KLW} for definitions).

Our upper  bounds on $\dim\big(\Bad_{\m ,\n }(c)\big)$ %on the other hand 
are based on homogeneous dynamics. Namely, we let 
\eq{ggm}{G = \SL_{\m +\n }( \R), \ \Gamma = \SL_{\m +\n }( \Z), \ X = \ggm\,,}
interpreting the latter space as the collection of unimodular lattices in $\R^{m+n}$.  This is a noncompact space of finite volume; we let $\mu$ denote the probability Haar measure on $X$. 
To $A\in \eM_{\m \times \n }$ we associate the   lattice $u_A\Z^{m+n}\subset X$, where
\eq{ua}{
u_A \df
\begin{pmatrix}
   I_m & A \\
    0 & I_n
       \end{pmatrix}\,.
}
Exploiting the  correspondence due to S.G.\ Dani \cite{dani, KM2},
we relate the set $\Bad_{\m ,\n }(c)$ to the set of orbits in $X$ %under some flow
which never enter a certain open subset. More precisely, we consider 
\eq{gt}{g_t \df \diag(e^{t/\m }, \ldots,
e^{ t/\m }, e^{-t/\n }, \ldots,
e^{-t/\n })\,,%\in G
}
and for $\vre>0$ define
\eq{ue}{
U_\vre \df \{\Lambda \in  X: \Lambda \cap B(\mathbf 0, \vre) \ne \{\0\}\}\,;
}
note that $X\ssm U_\vre$ is compact for every $\vre > 0$. 
Then it can be easily shown  (see Lemma~\ref{danicorr}) that for any $0 < c < 1$,  \eq{quantdani}{\Bad_{\m ,\n }(c) = \left\{A\in \mr :  \{g_tu_A\Z^{\m +\n }: t \ge 0\} \cap U_{\vre} = \varnothing\right\},} where $\vre = c^{\frac {1}{\m+\n}}$. 
We then 
%D prove 
use the exponential mixing property of the $g_t$-action
%, which allows us 
to produce many points whose orbits enter $U_\vre$, 
which makes it possible to estimate the number of small boxes needed to cover the set \equ{quantdani}.  This method is similar to the one used in \cite{KM1} to prove full dimension of the set of points with  bounded orbits of partially hyperbolic flows on arbitrary  homogeneous spaces $\ggm$.  Note however that the lower estimates  which can be extracted from that argument are weaker than what is produced by the method of Schmidt games in the case \equ{ggm}.

It is easy to see that the codimension of the set \equ{quantdani} in $\mr$ coincides with the codimension of the set
\eq{outsideofue}{
\left\{\Lambda\in X:  \{g_t\Lambda: t \ge 0\} \cap U_{\vre} = \varnothing\right\},} 
in $X$.  Note that $\mu(U_\vre)$ is asymptotically as $\vre \to 0$ equal to $\const\cdot \vre^{m+n}$, see \cite[Proposition 7.1]{KM2}. Thus, in the case  $m = n = 1$ the aforementioned result of Hensley shows that the codimension of the set \equ{outsideofue} is asymptotic to $\const\cdot\mu(U_\vre)$ as $\vre\to 0$. It is also worth mentioning a similar result of A.\ Ferguson and M.\ Pollicott \cite[Theorem 1.2]{FP}:  if $(J,f)$ is a {\it conformal repeller\/} (see \cite{FP} for definitions) and $z\in J$, then the Hausdorff codimension of the set of points of $J$ whose $f$-trajectories are disjoint from the ball $B(z,\vre)$  is, as $\vre \to 0$, asymptotic to a constant times the measure of the ball. 

In view of the aforementioned results it seems natural to conjecture that the value of $\codim\big(\Bad_{\m ,\n }(c)\big)$  is asymptotic to a constant times $c$; this means that the estimates in the left (resp.\ right)  hand side of \equ{2sided} can be significantly (resp.\ slightly) improved. Note however that the dynamical systems considered in \cite{H} and \cite{FP} admit a simple symbolic description, which is not the case for the partially hyperbolic flow studied in the present paper.

\medskip
{\em Acknowledgements.} We are grateful to M.\ Pollicott and M.\ Urbanski for useful discussions, to S.\ Weil for bringing our attention to his work \cite{W}, and to the referee of the previous version of this paper for helpful comments. The second-named author was supported
in part by NSF grant DMS-1101320.

\section{Lower estimates}\label{lower}

\subsection{Dimension estimates from the 
%D
hyperplane absolute game}
\label{game}
To produce a lower estimate on $\dim\big(\Bad_{\m ,\n }(c)\big)$, we will use
the {\sl hyperplane absolute  game\/} introduced in \cite{BFKRW}. See that paper for an extensive
treatment of the game, which is a variant of Schmit's game introduced by C.\ McMullen in \cite{S1}.
We give only the definition below; this definition varies slightly from the one given 
in \cite{BFKRW}, but the class of HAW\footnote{The term `HAW' is an acronym
which stands for `Hyperplane Absolute Winning'.} sets (on $\rd$) that we obtain will be the same.
Complications arise when defining the game to be played on fractal subsets of $\rd$,
which we are able to avoid because we play only on $\rd$.

Given $d \in \N$, a set $S \subset \rd$ and a parameter $0 < \beta < 1/3$, 
the hyperplane absolute game is played by two players,
whom we will call Alice and Bob. A play of the game consists of these two players 
alternately choosing subsets of $\rd$.
For convenience, given a ball $B \subset \rd$, write $\rho(B)$ for its radius,
and given a set $S \subset \rd$ and $\varepsilon > 0$, write 
$S^{(\varepsilon)} = \{\x\in\rd : \dist(\x,S) \le \varepsilon\}$.
The game begins with Bob choosing a point $\x_0\in \rd$
and a radius $\rho_0 > 0$, thus specifying a closed ball $B_0 = B(\x_0, \rho_0)$.
Given an integer $i \ge 0$, if $B_i$ is chosen, Alice chooses a hyperplane $\cl_{i+1}$, 
and Bob must then choose a closed ball
$B_{i+1} \subset B_{i}$, which satisfies
\begin{enumerate}
\item $\rho(B_{i+1}) \ge \beta\rho(B_i)$, and
\item $B_{i+1} \cap \cl_{i+1}^{(\beta\rho(B_i))} = \varnothing$.
\end{enumerate}
We thus obtain a nested sequence of closed balls $B_0 \supset B_1 \supset \dots$.
If $$\cap_i B_i \cap S\neq \varnothing\,,$$ then Alice is declared the winner; otherwise Bob is.
We say that $S$ is a HAW set if for each $0 < \beta < 1/3$, Alice has a strategy to win the game regardless of Bob's choices. This HAW property has many consequences; in particular,
it implies that $\dim S = d$. (See \cite{BFKRW}.)

In \cite{BFS}, the set $\Bad_{\m ,\n }$ is shown to be HAW. However, for any $c > 0$,
$\Bad_{\m ,\n }(c)$ clearly does not have this property, since it is not dense so $B_0$ can be chosen disjoint from it. The union $\Bad_{\m ,\n } = \cup_{c>0} \Bad_{\m ,\n }(c)$ is proven to be HAW
by choosing a $c > 0$ dependent on $\x_0$, $\rho$, and $\beta$, and tailoring Alice's
strategy in a game with these parameters to ensure that $\cap B_i  \cap  \Bad_{\m ,\n }(c)\neq \varnothing$.
There, the $c$ that was chosen and its relationship to the parameters of the game
were irrelevant, but for our proofs they happen to be crucial, so we introduce the following definitions.

\begin{defn}
We say $S$ is \emph{$(\x,\rho,\beta)$-HAW} if Alice has a strategy to win the hyperplane absolute game
with parameter $\beta$ provided Bob's initial move is centered at $x$ and has radius equal to $\rho$.

If $S$ is $(\x,\rho,\beta)$-HAW for each $\x \in \rd$ and each $\rho > 0$, we say it is
$\beta$-HAW.
\end{defn}

Thus, $S$ is HAW if and only if
$S$ is $\beta$-HAW for each $0 < \beta < 1/3$, or equivalently,
if $S$ is $(\x,\rho,\beta)$-HAW for each $0 < \beta < 1/3$, each $\rho > 0$, and each $\x \in \R^d$.

Now, the full dimension of HAW sets follows from the fact that they are all
$\alpha$-winning sets for Schmidt's game. See \cite{S1} for a definition
of the $\alpha$-winning and $(\alpha,\beta)$-winning properties. Analogously to the above,
a set is said to be $\alpha$-winning if it is $(\alpha,\beta)$-winning for all $0 < \beta < 1$,
and indeed one can show that for $0 < \beta < 1/3$, $\beta$-HAW sets are all $(1/3,3\beta)$-winning.
Schmidt proves that $\alpha$-winning subsets of $\rd$ have full dimension
by first estimating the dimension of $(\alpha,\beta)$-winning subsets of $\rd$,
so we get a dimension bound on $\beta$-HAW sets directly from \cite{S1}.
However, this bound does not suffice for our purposes, so we prove the following stronger
estimate which holds for the smaller class of $\beta$-HAW sets. We will use this theorem 
to obtain Theorem \ref{thmlower}.

\begin{thm}
\label{dimbound1}
There exists a constant $M_d$ depending only on $d \in \N$ such that
if $S$ is an $(\x,\rho,\beta)$-HAW subset of $\R^d$ and $B = B(\x,\rho)$ 
then $$\dim(S \cap B) 
\ge d - \frac{\log(1-M_d\beta)}{\log \beta}.$$
\end{thm}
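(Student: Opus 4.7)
The plan is to build a Cantor-like subset $T\subset S\cap B$ by unfolding Alice's winning strategy against a rich family of Bob plays, and then to estimate $\dim T$ via the mass distribution principle. Fix a winning strategy $\sigma$ for Alice and set $B_0 = B(\x,\rho)$. Inductively, suppose we have a collection $\mathcal{F}_k$ of pairwise disjoint closed balls of radius $\rho\beta^k$, each arising from a play of Bob through stage $k$ in which Alice follows $\sigma$. For each $B\in\mathcal{F}_k$, Alice's response is a hyperplane $\cl_B$; I would choose as children of $B$ in $\mathcal{F}_{k+1}$ a collection of $N_\beta$ pairwise disjoint closed balls of radius $\rho\beta^{k+1}$ lying in $B\setminus\cl_B^{(\beta\rho\beta^k)}$, where $N_\beta$ is the uniform packing count established below. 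Each such child is a valid Bob reply, so every infinite descending chain in the tree is a play of Bob against $\sigma$; its limit point lies in $S$, and therefore $T:=\bigcap_k\bigcup\mathcal{F}_k\subset S\cap B$.

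The key geometric input is the packing bound $N_\beta\ge\beta^{-d}(1-M_d\beta)$, with $M_d$ depending only on $d$. I would prove it by exploiting the fact that the HAW property is preserved under bi-Lipschitz changes of norm, so without loss of generality the game is played in the supremum norm, whose closed balls are axis-aligned cubes. A cube $B$ of side $2\rho$ then partitions into exactly $\beta^{-d}$ sub-cubes of side $2\beta\rho$, each containing an inscribed sup-ball of radius $\beta\rho$; these are the candidate child balls. To count the bad ones, note that for any hyperplane $\cl$ the sup-slab $\cl^{(\beta\rho)}$ has Euclidean thickness $O_d(\beta\rho)$, so its intersection with $B$ has volume $O_d(\beta\rho^d)$, and dividing by the sub-cube volume $(2\beta\rho)^d$ gives $O_d(\beta^{-(d-1)})$ sub-cubes meeting the slab. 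The remaining $\beta^{-d}-O_d(\beta^{-(d-1)})=\beta^{-d}(1-O_d(\beta))$ sub-cubes yield the required $N_\beta$ disjoint balls. I expect this estimate to be the main technical point, particularly making the count genuinely uniform over all hyperplane orientations and over sub-cubes that only partially lie inside $B$; this will force the constant $M_d$ to absorb the packing loss near the boundary of $B$ as well as the angle-dependence of the sup-thickness of the slab.

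With $N_\beta$ in hand, the dimension bound is routine. Assign to each ball of $\mathcal{F}_k$ the mass $N_\beta^{-k}$; this defines a probability measure $\nu$ on $T$. For $x\in T$ and $r\in[\rho\beta^{k+1},\rho\beta^k]$, the ball $B(x,r)$ meets at most $3^d$ members of $\mathcal{F}_k$ by a volume count (they are pairwise disjoint of radius $\rho\beta^k$, so their centers lie in $B(x,2\rho\beta^k)$), whence $\nu(B(x,r))\le 3^d N_\beta^{-k}$. Writing $s=\log N_\beta/\log(1/\beta)$ so that $N_\beta^{-k}=\beta^{sk}$, the lower bound $r\ge\rho\beta^{k+1}$ yields $\nu(B(x,r))\le C_d r^s$ for a suitable $C_d$. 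The mass distribution principle then gives $\dim(S\cap B)\ge\dim T\ge s = d-\log(1-M_d\beta)/\log\beta$, as claimed.
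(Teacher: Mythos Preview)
Your overall strategy matches the paper's: run Alice's winning strategy against a tree of Bob plays to build a Cantor subset $T\subset S\cap B$, and bound $\dim T$ via the branching count $N_\beta$. The paper invokes the direct formula $\dim T\ge\log N_\beta/\log(1/\beta)$ rather than the mass distribution principle, but these are equivalent here.

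The genuine gap is the ``without loss of generality sup-norm'' step. Bi-Lipschitz invariance is a statement about the HAW property (winning for \emph{every} $\beta<1/3$), whereas the hypothesis here is $(\x,\rho,\beta)$-HAW for a single fixed $\beta$, and this quantitative notion is norm-sensitive: passing from Euclidean balls (the norm in which the paper's game is played---witness the $\sqrt{d}$ factors throughout its proof) to sup-balls changes both what counts as a legal Bob move and the thickness of Alice's slab $\cl^{(\beta\rho)}$, and there is no reason the same $\beta$ survives the change. Since the conclusion carries $\log\beta$ in the denominator rather than $\beta$ up to constants, you cannot simply absorb a norm-change factor into $M_d$. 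The paper sidesteps this entirely by staying in the Euclidean game and mediating through cubes: it inscribes a hypercube $C_{k,i}$ in each Euclidean ball $B_k$, subdivides it into at least $(\beta^{-1}-1)^d$ sub-hypercubes of side $2\beta^{k+1}\rho/\sqrt{d}$, and takes the Euclidean balls \emph{superscribing} those sub-hypercubes as Bob's candidate replies; one then checks directly that each superscribed ball has radius exactly $\beta\rho(B_k)$, lies inside $B_k$, and---for sub-hypercubes at distance at least $2\beta^{k+1}\rho$ from $\cl$---avoids $\cl^{(\beta\rho(B_k))}$. Two smaller issues in your sketch: partitioning into exactly $\beta^{-d}$ sub-cubes tacitly assumes $\beta^{-1}\in\N$, and your volume-division argument for the bad-cube count goes the wrong way (a sub-cube can meet the slab in arbitrarily small volume, so slab volume divided by sub-cube volume does not bound the number of meeting sub-cubes from above); you must first thicken the slab by the sub-cube diameter, which still yields the $O_d(\beta^{-(d-1)})$ bound you need.
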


\begin{proof}
Suppose Bob chooses $B_0 = B(\x,\rho)$.
We will use Alice's winning strategy to
construct a Cantor-like subset of $B_0 \cap S$ with the required dimension.
Let $C_0$ be the hypercube inscribed in $B_0$.
We will define $C_k$ in such a way that the following hold:
\begin{enumerate}
\item $C_k = \bigcup_{i=1}^{n_k} C_{k,i}$, where the $C_{k,i}$ are
	essentially disjoint hypercubes of side length $\frac{2\beta^k\rho}{\sqrt{d}}$.
\item For each $k$ and $i$, there is a unique $j$ such that
	$C_{k,i} \subset C_{k-1,j}$.
\item For some constant $M_d$, each
	hypercube $C_{k,i}$ contains at least $\beta^{-d} - M_d\beta^{-d+1}$
	hypercubes of the form $C_{k+1, j}$.
\item Given $C_{k,i_k} \subset C_{k-1,i_{k-1}} \subset \dots \subset C_{0, i_0}$,
	the balls $B_{j}$ superscribing $C_{j,i_{j}}$ define the first $k+1$ moves
	of a legal play of the HAW game in which Alice uses her winning strategy.
\end{enumerate}
It follows from these properties that $C \df \cap C_k \subset S$.
Suppose $C_0, \dots, C_k$ are constructed so that the above hold and let $1 \le i \le n_k$.
Let $B_0, \dots, B_k$ be the initial play of the game corresponding to $C_{k,i}$,
which is well-defined by properties (2) and (4) above.
Take $\cl$ to be the hyperplane dictated by Alice's 
winning strategy. Note that $C_{k,i}$ contains
$(\beta^{-1}-1)^d$ essentially disjoint hypercubes of side length 
$\frac{2\beta^{k+1}\rho}{\sqrt{d}}$,
and the ball superscribing each such hypercube has radius $\beta^{k+1}\rho$ 
and is contained in $B_k$.
If, additionally, a given hypercube has distance at least $2\beta^{k+1}\rho$
from $\cl$, then since the distance from the center of the hypercube to its boundary is
$\frac{\beta^{k+1}\rho}{\sqrt{d}}$, the ball superscribing it will 
have distance greater than
$$2\beta^{k+1}\rho - \left(\beta^{k+1}\rho - \frac{\beta^{k+1}\rho}{\sqrt{d}}\right) 
	\ge \beta^{k+1}\rho \ge \beta\rho(B_k)$$
from $\cl$. Thus, this ball will be disjoint
 from $\cl^{(\beta\rho(B_k))}$ and will therefore
be a legal move for Bob in the hyperplane absolute game.
There are at least
$$(\beta^{-1}-1)^d - (2\sqrt{d}+1)(\sqrt{d}\beta^{-1}+1)^{d-1}$$
hypercubes $C_{k+1,j} \subset C_{k,i}$ with the required distance from $\cl$.
Including all such hypercubes in $C_{k+1}$ for each $i$ guarantees that properties 
(1)-(4) above are satisfied for 
$C_0, \dots, C_{k+1}$, if we take
$M_d =  d\cdot d! +(2\sqrt{d})^{d-1}(2\sqrt{d}+1)$ so that 
\begin{eqnarray*}
\beta^{-d} - M_d\beta^{-d+1} &=& \beta^{-d} - d\cdot d! \beta^{-d+1}
	-(2\sqrt{d}+1)(2\sqrt{d}\beta^{-1})^{d-1}\\
	&\le&  (\beta^{-1}-1)^d - (2\sqrt{d}+1)(\sqrt{d}\beta^{-1}+1)^{d-1} .
\end{eqnarray*}
(Here, we use the fact that $\beta < 1/2$.)
Thus, the induction continues and we obtain a fractal subset $C \subset S \cap B_0$.

Since, at each stage in the construction of $C$ we keep at least $\beta^{-d} - M_d\beta^{-d+1}$
stage-$(k+1)$ hypercubes within each stage-$k$ hypercube, and
the diameters are scaled down by $\beta$,
\begin{eqnarray*}
\dim(S\cap B_0) \ge \dim(C) &\ge& \frac{\log(\beta^{-d} - M_d\beta^{-d+1})}{\log \beta^{-1}}\\
  &=& \frac{-d\log(\beta) + \log(1-M_d\beta)}{-\log \beta} = d - \frac{\log(1-M_d\beta)}{\log \beta}.
\end{eqnarray*}
\end{proof}

As an immediate consequence of the previous theorem,
we can uniformly bound from below the Hausdorff dimension of any $\beta$-HAW
set within any open set $U$:

\begin{cor}
\label{dimbound2}
If $M_d$ is as in Theorem \ref{dimbound1}, $S$ is a $\beta$-HAW subset of $\R^d$,
and $U \subset \R^d$ is open, 
then $$\dim(S \cap U) \ge d - \frac{\log(1-M_d\beta)}{\log \beta}.$$
\end{cor}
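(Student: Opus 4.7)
The plan is to reduce the statement directly to Theorem \ref{dimbound1} by a trivial localization argument. If $U$ is empty there is nothing to prove, so assume $U \neq \varnothing$ and pick any point $\x \in U$. Since $U$ is open, there exists $\rho > 0$ small enough that the closed ball $B = B(\x,\rho)$ is contained in $U$.

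By hypothesis $S$ is $\beta$-HAW, which by the definition given just before the corollary means that $S$ is $(\x', \rho', \beta)$-HAW for \emph{every} $\x' \in \R^d$ and every $\rho' > 0$. In particular, $S$ is $(\x,\rho,\beta)$-HAW for the specific center and radius just chosen. Theorem \ref{dimbound1} then applies and gives
\[
\dim(S \cap B) \;\ge\; d - \frac{\log(1 - M_d \beta)}{\log \beta}.
\]
Since $B \subset U$ implies $S \cap B \subseteq S \cap U$, monotonicity of Hausdorff dimension immediately yields the desired lower bound on $\dim(S \cap U)$.

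There is essentially no obstacle here: all of the technical work resides in Theorem \ref{dimbound1}, and the corollary is little more than a repackaging of that theorem from a fixed initial ball to an arbitrary open set. The only subtlety worth flagging is that one genuinely needs the $\beta$-HAW hypothesis, which supplies a winning strategy beginning from every initial position, rather than a single $(\x_0,\rho_0,\beta)$-HAW instance; this is exactly what lets us pick $B$ freely inside $U$ without worrying whether Alice has a strategy starting from that particular ball.
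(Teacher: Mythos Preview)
Your argument is correct and matches the paper's approach: the corollary is stated there as an immediate consequence of Theorem~\ref{dimbound1}, and your localization-plus-monotonicity reasoning is exactly the intended one-line deduction. Your remark that the full $\beta$-HAW hypothesis (rather than a single $(\x_0,\rho_0,\beta)$-HAW instance) is what permits choosing the ball freely inside $U$ is the only point worth making, and you made it.
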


As another direct corollary, we have the following bound on the decay rate of the codimension of 
$(\x,\rho,\beta)$-HAW sets.

\begin{cor}
\label{codimbound}
Let 
$$c_d(\beta) 
	= \sup\{\codim(S) : S \subset \rd \text{ is $(\x,\rho,\beta)$-HAW 	
		for some }\x \in\rd, \rho > 0\}.$$
Then $c_d(\beta) = O\left(\frac{\beta}{\log(1/\beta)}\right)$ as $\beta \to 0$.
\end{cor}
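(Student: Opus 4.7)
The proof will be essentially a routine consequence of Theorem \ref{dimbound1} combined with a Taylor expansion, so it should occupy just a few lines.

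First I would observe that for any set $S\subset \R^d$ which is $(\x,\rho,\beta)$-HAW, taking $B = B(\x,\rho)$ and applying Theorem \ref{dimbound1} yields
$$\dim(S) \ge \dim(S\cap B) \ge d - \frac{\log(1 - M_d\beta)}{\log\beta},$$
so that
$$\codim(S) \le \frac{\log(1-M_d\beta)}{\log\beta}.$$
Since the right-hand side depends only on $d$ and $\beta$, taking the supremum over all admissible $\x$ and $\rho$ gives the same upper bound for $c_d(\beta)$.

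Next I would carry out the asymptotic analysis. Both numerator and denominator on the right-hand side above are negative for small $\beta$, so the quotient is positive. Using $\log(1-M_d\beta) = -M_d\beta + O(\beta^2)$ as $\beta\to 0$, together with $\log\beta = -\log(1/\beta)$, gives
$$c_d(\beta) \le \frac{\log(1-M_d\beta)}{\log\beta} = \frac{M_d\beta + O(\beta^2)}{\log(1/\beta)} = O\!\left(\frac{\beta}{\log(1/\beta)}\right),$$
which is precisely the desired conclusion.

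There is really no main obstacle here: the content of the corollary is entirely in Theorem \ref{dimbound1}, and what remains is just tracking signs (confirming that the ratio is positive for $\beta$ small) and applying the first-order expansion of $\log(1-M_d\beta)$. The only mild subtlety worth flagging is the direction of the inequality $\codim(S) \le \codim(S\cap B)$, which follows from $\dim(S)\ge \dim(S\cap B)$ and goes exactly the way we need it to pass from a local bound on $S\cap B$ to a global bound on $S$ itself.
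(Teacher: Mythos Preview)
Your proof is correct and follows essentially the same route as the paper's own argument: apply Theorem~\ref{dimbound1} to bound $c_d(\beta)$ by $\log(1-M_d\beta)/\log\beta$, then analyze the asymptotics. The paper phrases the last step as the limit $\dfrac{\log(1-M_d\beta)}{-\beta}\to M_d$, whereas you use the equivalent Taylor expansion, but the content is identical.
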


\begin{proof}
From Theorem \ref{dimbound1}, it follows that
$$c_d(\beta) \le \frac{\log(1-M_d\beta)}{\log \beta}.$$
Hence,
$$c_d(\beta)\cdot\frac{\log(1/\beta)}{\beta} 
		\le \frac{\log(1-M_d\beta)}{-\beta} \longrightarrow M_d < \infty.$$
\end{proof}

\subsection{A lower dimension bound for $\Bad_{\m ,\n }(c)$}
We now apply Theorem \ref{dimbound1} to deduce Theorem \ref{thmlower}.
To do so we will need to obtain the $(\x,\rho,\beta)$-HAW property for
$\Bad_{\m ,\n }(c)$, and carefully note the dependence of the parameters $\x$, $\rho$,
and $\beta$ on the approximation constant $c$.
The case $\n =1$ is easier.
\begin{thm}
\label{badd}
$\Bad_{\m, 1}(c) \subset \R^\m$ is $(\x,1,\beta)$-HAW
	whenever $c^{1/\m} \le \frac{\beta^2}{4m!}$, $\x\in\R^\m$, 
	and $0 < \beta < 1/3$.
\end{thm}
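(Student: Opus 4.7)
The plan is to exhibit a specific winning strategy for Alice in the $\beta$-hyperplane absolute game starting from $B_0=B(\x,1)$. For $q \in \Z \setminus\{0\}$ and $p \in \Z^m$ write $r(q) := c^{1/m}/|q|^{(m+1)/m}$ and call $\mathcal D(p,q) := B(p/q, r(q))$ the ``danger ball'' of the rational $p/q$. By definition of $\Bad_{m,1}(c)$, its complement is the union of all $\mathcal D(p,q)$, so Alice wins if the shrinking chain $B_0 \supset B_1 \supset \cdots$ limits to a point avoiding every danger ball. The geometric key is that if Alice picks a hyperplane $\cl_k$ through $p/q$ at stage $k$ and $r(q) \le \beta \rho(B_k)$, then $\cl_k^{(\beta \rho(B_k))} \supset \mathcal D(p,q)$; hence by the game rules every future ball $B_{k'}$ ($k' > k$) is permanently disjoint from $\mathcal D(p,q)$.

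Alice's strategy will therefore be scheduling-based: at each stage $k$ she takes the hyperplane $\cl_k$ through some as-yet-unblocked reduced rational $p/q$ with $r(q) \le \beta \rho(B_k)$ and $\mathcal D(p,q) \cap B_k \ne \varnothing$. To specify which rational, I would have her process unblocked rationals in increasing order of $|q|$, picking the smallest eligible one at each stage (playing an irrelevant hyperplane disjoint from $B_k$ if no rational is eligible). The central claim to verify is that under the hypothesis $c^{1/m} \le \beta^2/(4m!)$, this greedy schedule keeps up with the stream of dangerous rationals, so no rational ever ``slips past'' Alice by its danger ball engulfing $B_k$ before she reaches it.

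The crux of the argument, and the main obstacle, is a separation/counting lemma asserting uniqueness of newly-eligible rationals per stage. The key inequalities are: (i) the sup-norm separation $\|p_1/q_1 - p_2/q_2\|_\infty \ge 1/(|q_1| |q_2|)$ for distinct reduced rationals, (ii) the containment of both centers $p_i/q_i$ in a ball of radius $(1+\beta)\rho(B_k)$ around $\x_k$, which forces $|q_1|\cdot|q_2| \ge 1/(2(1+\beta)\rho(B_k))$, and (iii) the fit-in-bulge lower bound $|q_i| \ge (c^{1/m}/(\beta \rho(B_k)))^{m/(m+1)}$ coming from eligibility at stage $k$. Combined with a dyadic restriction on $r(q)$ per stage, these force a lower bound on $\beta^2/c^{1/m}$ in terms of dimension-dependent constants; the $m!$ factor absorbs the lattice-point counting in the dyadic $|q|$-ranges together with the sup-to-Euclidean norm conversions.

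Once this uniqueness is in hand, the $\beta^2$ in the hypothesis provides exactly the single stage of scheduling slack per dyadic block that Alice needs to match her one-hyperplane-per-stage budget to the production rate of newly-eligible dangerous rationals; a short check handles the small-$|q|$ base case using the fact that only finitely many rationals have danger ball meeting $B_0$ with $r(q) \le \beta$. The limit point $\bigcap_k B_k$ then avoids every danger ball and therefore lies in $\Bad_{m,1}(c)$, establishing the $(\x,1,\beta)$-HAW property.
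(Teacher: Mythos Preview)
Your scheduling approach---blocking one rational per turn via a hyperplane through it---works only for $m=1$ and breaks down for $m\ge 2$. The crux you identify, the ``uniqueness of newly-eligible rationals per stage,'' is exactly where the argument fails.

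Run your own inequalities. Suppose two distinct eligible rationals $p_1/q_1,\,p_2/q_2$ have danger balls meeting $B_k$ and lie in the same dyadic $q$-block, say $|q_i|\le Q$ with $Q$ on the order of $\big(c^{1/m}/(\beta\rho_k)\big)^{m/(m+1)}$. From (i) and (ii) you get $|q_1||q_2|\ge \big(2(1+\beta)\rho_k\big)^{-1}$, while the dyadic upper bound gives $|q_1||q_2|\le Q^2$. Combining yields
\[
\text{const}\cdot\left(\frac{c^{1/m}}{\beta}\right)^{2m/(m+1)}\ \ge\ \rho_k^{\,(m-1)/(m+1)}.
\]
For $m=1$ the right side is $1$ and you obtain a genuine constraint on $c^{1/m}/\beta$, forcing uniqueness. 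For $m\ge 2$ the right side tends to $0$ as $\rho_k\to 0$, so the inequality is vacuous and uniqueness fails. Concretely, at deep stages there can be many rationals with denominators in the relevant dyadic range whose danger balls all meet $B_k$; Alice's one-hyperplane-per-turn budget cannot keep pace, and Bob can steer toward an unblocked one.

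What the paper does instead is invoke the Simplex Lemma (Lemma~\ref{simplex}): all rationals in $B(\x_k,2\rho_k)$ with denominator at most $(2m!)^{-m/(m+1)}(2\rho_k)^{-m/(m+1)}$ lie on a \emph{single} hyperplane $\cl_{k+1}$, because $m+1$ of them in general position would span a simplex of volume at least $1/(m!\,Q^{m+1})$, too large to fit. Alice removes $\cl_{k+1}$ and thereby blocks \emph{all} rationals in that denominator range simultaneously. The factor $m!$ in the hypothesis $c^{1/m}\le\beta^2/(4m!)$ comes from this simplex-volume computation, not from lattice-point counting. Your strategy can be salvaged by replacing ``a hyperplane through some unblocked rational'' with ``the Simplex-Lemma hyperplane for $B(\x_k,2\rho_k)$''; at that point the scheduling bookkeeping becomes unnecessary and the proof collapses to the paper's.
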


In \cite{BFKRW}, it was proved that $\Bad_{\m, 1}(c)$ is HAW. Our proof of Theorem
\ref{badd} uses the same basic strategy but yields a better bound on $\beta$ in terms of $c$.\footnote{We thank the referee for suggesting this improvement.}
We will use the `simplex lemma,' which was proved by Davenport and
appears in \cite{KTV} in a form which, in particular, implies the following.

\begin{lem}[Simplex Lemma]
\label{simplex}
Let $m \in \N$, $r > 0$, and $\x \in \R^m$. Then the rational points in $B(\x, r)$
with denominator at most $(2m!)^{-\frac{m}{m+1}}r^{-\frac{m}{m+1}}$ all lie in a single hyperplane.
\end{lem}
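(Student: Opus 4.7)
The plan is to argue by contradiction using a two-sided bound on the volume of a simplex formed by rational points of small denominator. Assume the rational points in $B(\x,r)$ with denominator at most $Q \df (2m!)^{-m/(m+1)} r^{-m/(m+1)}$ do \emph{not} all lie in a single hyperplane. Since their affine hull then has dimension $m$, we can extract $m+1$ of them, say $\p_0/q_0,\ldots,\p_m/q_m$ with $\p_i\in\Z^m$ and $1\le q_i\le Q$, that are affinely independent and therefore span a non-degenerate $m$-simplex $\Delta\subset B(\x,r)$.

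For the lower bound on the volume I would use the classical determinantal expression
\[
\operatorname{vol}(\Delta) = \frac{1}{m!}\left|\det\begin{pmatrix}1 & \p_0/q_0 \\ \vdots & \vdots \\ 1 & \p_m/q_m\end{pmatrix}\right|.
\]
Multiplying the $i$-th row by $q_i$ extracts a factor $1/(q_0q_1\cdots q_m)$ and leaves an integer matrix; by affine independence the integer determinant is nonzero, hence of absolute value $\ge 1$. This gives
\[
\operatorname{vol}(\Delta) \ge \frac{1}{m!\, q_0q_1\cdots q_m} \ge \frac{1}{m!\, Q^{m+1}}.
\]

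For the upper bound, since $\Delta\subset B(\x,r)$ and the paper takes $\|\cdot\|$ to be the sup norm, the simplex sits inside a cube of side $2r$, whence $\operatorname{vol}(\Delta)\le (2r)^m$. Combining the two estimates forces $Q^{m+1} \ge \frac{1}{m!\,(2r)^m}$, which contradicts the hypothesis $Q^{m+1}\le (2m!)^{-m}r^{-m}$ precisely when $(2m!)^m > m!\,2^m$, i.e.\ when $(m!)^{m-1}>1$. This holds for all $m\ge 2$; the borderline case $m=1$ is an immediate separation check, since two distinct rationals with denominators bounded by $Q=(2r)^{-1/2}$ are at distance $\ge 1/Q^2 = 2r$ and so cannot both lie strictly inside an interval of length $2r$.

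The conceptual heart of the argument is the integrality step obtained by clearing denominators; the rest is elementary volume comparison. The only genuine technical care is tracking the constants so that the stated threshold $(2m!)^{-m/(m+1)}r^{-m/(m+1)}$ is matched exactly, rather than merely up to an implied constant, and handling the norm convention — but since the paper has already announced the sup-norm normalization, neither presents a serious obstacle.
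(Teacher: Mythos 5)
The paper does not supply its own proof of this lemma; it is quoted from Davenport via the reference \cite{KTV}, so there is no in-paper argument to compare against. Your proof is the standard determinant/volume argument and it is essentially correct: clearing denominators row by row in the $(m+1)\times(m+1)$ matrix with rows $(1,\p_i/q_i)$ yields an integer matrix with nonzero determinant, giving $\operatorname{vol}(\Delta) \ge \frac{1}{m!\,q_0\cdots q_m} \ge \frac{1}{m!\,Q^{m+1}}$, while the crude inclusion of $\Delta$ in the cube $B(\x,r)$ of sup-norm volume $(2r)^m$ gives the upper bound. Comparing with $Q^{m+1}\le (2m!)^{-m}r^{-m}$ produces a \emph{strict} contradiction exactly when $(m!)^{m-1}>1$, which covers all $m\ge 2$, and you correctly fall back on the direct separation estimate $|p_1/q_1-p_2/q_2|\ge 1/(q_1q_2)\ge 1/Q^2=2r$ for $m=1$.

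The one point worth flagging is that the $m=1$ patch genuinely requires the ball $B(\x,r)$ to be open (or the denominator bound to be strict): taking $\x=1/2$, $r=1/2$ gives $Q=1$, and the two integers $0$ and $1$ both lie in the \emph{closed} interval $B(\x,r)$ but not in a single point. This is a borderline feature of the constant in the stated lemma itself rather than a flaw in your argument, and it is harmless for the application in Theorem~\ref{badd}; still, it would be cleaner to state the threshold with strict inequality. You might also note that a Hadamard-type bound $\operatorname{vol}(\Delta)\le (2r\sqrt{m})^m/m!$ would give a larger admissible $Q$ than the one in the statement, but the crude cube bound already suffices to match the constant as written, so nothing more is needed.
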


\begin{proof}[Proof of Theorem \ref{badd}]
Let $0 < \beta < 1/3$ and fix $\rho_0 = 1$.
Let $B(\x_k, \rho_k)$ denote the ball Bob chooses on his $(k+1)$st turn.
By Lemma \ref{simplex}, the rationals in $B(\x_k, 2\rho_k)$ with denominator at most
$(2m!)^{-\frac{m}{m+1}}(2\rho_k)^{-\frac{m}{m+1}}$ all lie in a single hyperplane $\cl_{k+1}$.
Alice will choose this hyperplane as her $(k+1)$st move in the game.
Then every rational with denominator at most $(2m!)^{-\frac{m}{m+1}}(2\rho_k)^{-\frac{m}{m+1}}$
is either outside $B(\x_k, 2\rho_k)$ in which case its distance from $B(\x_{k+1}, \rho_{k+1})$
is at least $\rho_k \ge \beta\rho_k$, or it is in $\cl_{k+1}$ in which
case its distance from $B(\x_{k+1}, \rho_{k+1})$ is at least $\beta \rho_k$.
Let $\x \in \cap B(\x_k, \rho_k)$,  $\p \in \Z^m$, and $q \in \N$.
Since $$q \ge 1 > (2m!)^{-\frac{m}{m+1}} > (2m!)^{-\frac{m}{m+1}}(2\rho_0)^{-\frac{m}{m+1}},$$
there is a unique $k \in \N$ such that
$$(2m!)^{-\frac{m}{m+1}}(2\rho_k)^{-\frac{m}{m+1}} \ge q 
	> (2m!)^{-\frac{m}{m+1}}(2\rho_{k-1})^{-\frac{m}{m+1}}
		\ge (2m!)^{-\frac{m}{m+1}}\left(\frac{2\rho_k}{\beta}\right)^{-\frac{m}{m+1}}.$$
By the above, Alice's strategy guarantees that
$$d(\x, \p/q) \ge \beta \rho_k \ge \beta (\beta/2) (2m!)^{-1} q^{-\frac{m+1}{m}} 
		= \frac{\beta^2}{4m!} q^{-\frac{m+1}{m}}.$$
%DK
This can be rewritten as $$\|q\x - \p\|^m \ge \left(\frac{\beta^2}{4m!}\right)^m q^{-1}\ge cq^{-1}\,,$$ finishing the proof.
\end{proof}

\noindent Combining Theorem \ref{badd} and Corollary \ref{codimbound}, 
we get the following.

\begin{cor}
\label{codimbadd}
For any $\m \in \N$, $\codim\big(\Bad_{\m, 1}(c)\big) 
	= O\left(\frac{c^{\frac{1}{2\m}}}{\log(1/c)}\right)$ as $c \to 0$.
\end{cor}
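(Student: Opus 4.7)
The plan is a direct combination of Theorem \ref{badd} with Corollary \ref{codimbound}. For each sufficiently small $c > 0$, I choose $\beta = 2\sqrt{\m!}\,c^{1/(2\m)}$, so that $c^{1/\m} = \beta^2/(4\m!)$; for $c$ small enough this $\beta$ lies in $(0, 1/3)$, and Theorem \ref{badd} then guarantees that $\Bad_{\m,1}(c)$ is $(\x, 1, \beta)$-HAW for every $\x \in \R^\m$.

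In particular, with this $\beta$, the set $\Bad_{\m,1}(c)$ falls into the class over which $c_\m(\beta)$ of Corollary \ref{codimbound} is taken, whence $\codim\!\big(\Bad_{\m,1}(c)\big) \le c_\m(\beta) = O\!\big(\beta/\log(1/\beta)\big)$ as $\beta \to 0$. Substituting the chosen $\beta \asymp c^{1/(2\m)}$ and using $\log(1/\beta) = \tfrac{1}{2\m}\log(1/c) + O(1)$, one obtains $\codim\!\big(\Bad_{\m,1}(c)\big) = O\!\big(c^{1/(2\m)}/\log(1/c)\big)$ as $c \to 0$, which is exactly the stated bound.

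There is essentially no genuine obstacle here: all the substantive work has already been absorbed into Theorem \ref{badd} (where the quadratic relationship $c^{1/\m} \asymp \beta^2$ between the HAW parameter and the approximation constant appears, ultimately traceable to the simplex lemma) and into Corollary \ref{codimbound} (the general $\beta/\log(1/\beta)$ codimension bound for $\beta$-HAW sets). The remaining bookkeeping — verifying that $\beta < 1/3$ for $c$ small and noting that the multiplicative constant between $\beta$ and $c^{1/(2\m)}$, as well as the additive $O(1)$ in $\log(1/\beta)$, are absorbed into the big-$O$ — is immediate.
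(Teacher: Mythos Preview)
Your proposal is correct and matches the paper's approach exactly: the paper simply states that the corollary follows by combining Theorem~\ref{badd} and Corollary~\ref{codimbound}, and your write-up spells out precisely this combination, including the choice $\beta = 2\sqrt{\m!}\,c^{1/(2\m)}$ that saturates the hypothesis $c^{1/\m} \le \beta^2/(4\m!)$.
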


Before proceeding to the general case, we use Khintchine's transference principle to obtain an estimate for the $\m=1$ case, as this method provides a tighter
bound than the one we get from taking $m=1$ in the general theorem below.
Specifically, we will use the following (for a proof, see \cite[Chapter V, \S 2]{C}):

\begin{prop}
\label{transference}
For each $\m, \n \in \N$ and each $c > 0$, $\Bad_{\m,\n}(c) \supset \Bad_{\n,\m}(c')$,
where $c' = \const(\m, \n) c^{\frac{1}{\m+\n-1}}$.
\end{prop}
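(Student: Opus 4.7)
The plan is to establish this via the classical Khintchine transference principle, interpreted through the lattice $\Lambda \df u_A\Z^{m+n}$ from \equ{ua} and its dual. I will argue by contrapositive: given a witness $q\in\Z^n\ssm\{0\}$, $p\in\Z^m$ to $A\notin \Bad_{m,n}(c)$, that is $\|q\|^n\|Aq-p\|^m < c$, I want to produce $r\in\Z^m\ssm\{0\}$ and $s\in\Z^n$ with $\|r\|^m\|A^Tr-s\|^n < \operatorname{const}(m,n)\cdot c^{1/(m+n-1)}$, which shows $A^T\notin\Bad_{n,m}(c')$.

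The vector $(Aq-p,q)$ lies in $\Lambda$, and after applying the $g_t$ from \equ{gt} that balances the two blocks, it has sup-norm $\vre\lesssim c^{1/(m+n)}$. A direct computation of $(u_A^T)^{-1}$ identifies the dual lattice as $\Lambda^* = \{(x,y-A^Tx):x\in\Z^m,\,y\in\Z^n\}$, and since $g_t$ is diagonal, $(g_t\Lambda)^* = g_{-t}\Lambda^*$. Short vectors in $g_{-t}\Lambda^*$ are thus precisely what we want: they encode good rational approximations for $A^T$.

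To pass from $\lambda_1(g_t\Lambda)\le\vre$ to a bound on $\lambda_1(g_{-t}\Lambda^*)$, I combine Mahler duality $\lambda_1(L)\lambda_d(L^*)\asymp 1$ with Minkowski's second theorem $\prod_i\lambda_i(L)\asymp 1$, applied to the unimodular lattice $L=g_t\Lambda$ with $d=m+n$. Since the minima are increasing and $\lambda_1(L)\le\vre$, the product relation forces $\lambda_d(L)\gtrsim \vre^{-1/(d-1)}$, whence $\lambda_1(L^*)\lesssim \vre^{1/(d-1)}$. This exponent $1/(m+n-1)$ is precisely the source of the constant in the proposition. Unpacking the resulting short vector $g_{-t}(x,y-A^Tx)$ componentwise and raising the two block bounds to the powers $m$ and $n$ causes the $e^{\pm t}$ factors to cancel, giving $\|r\|^m\|A^Tr-s\|^n\lesssim \vre^{(m+n)/(m+n-1)}\lesssim c^{1/(m+n-1)}$ with $r=x$, $s=y$.

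The main obstacle, beyond the routine bookkeeping of dimensional constants, is ensuring $r\neq 0$: if the first block $x$ of the extracted short vector vanishes, then the second block $y$ is an integer vector of sup-norm strictly less than $1$ once $c$ lies below a threshold depending only on $m,n$, forcing $y=0$ and contradicting the nonvanishing of the dual-lattice vector. Absorbing this threshold into $\operatorname{const}(m,n)$ completes the argument.
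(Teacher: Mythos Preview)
Your argument is correct. The paper itself does not prove this proposition; it simply refers the reader to Cassels \cite[Chapter~V, \S2]{C} for the classical Khintchine transference principle. What you have written is precisely the geometry-of-numbers proof of that principle, phrased in the lattice language of \equ{ua}--\equ{gt}: balance the flow $g_t$ so that the witness vector in $u_A\Z^{m+n}$ has sup norm $\vre\asymp c^{1/(m+n)}$, pass to the dual lattice $g_{-t}(u_A^{T})^{-1}\Z^{m+n}$, and use Minkowski's second theorem together with Mahler's inequality $\lambda_1(L^*)\lambda_{m+n}(L)\asymp 1$ to extract a short dual vector of size $\lesssim \vre^{1/(m+n-1)}$. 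Your bookkeeping of exponents and your handling of the degeneracy $x=0$ (forcing $y\in\Z^n$ with $\|y\|<1$ once $t\ge 0$ and $c$ is below a dimensional threshold, the large-$c$ range being absorbed into the constant since $\Bad_{n,m}(c')=\varnothing$ for $c'$ large) are both fine.

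Two remarks that do not affect correctness. First, Mahler's transference inequality as usually stated pairs the sup-norm minima of $L$ with the $\ell^1$-norm minima of $L^*$ (polar of the cube is the cross-polytope); this only changes the implicit constants by a factor depending on $m+n$, which is already allowed. Second, in the boundary case $A\q=\p$ the balancing value of $t$ is undefined, but then $\lambda_1(g_t u_A\Z^{m+n})\to 0$ as $t\to\infty$, so you may take $\vre$ arbitrarily small and the rest of the argument goes through unchanged.
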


\noindent Applying Proposition \ref{transference} and Theorem \ref{badd}, we get

\begin{cor}
For any $\n \in \N$, $\codim\big(\Bad_{1, \n}(c)\big) = O\left(\frac{c^{\frac{1}{2\n^2}}}{\log(1/c)}\right)$.
\end{cor}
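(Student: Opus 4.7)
The plan is to reduce the linear-form case to the vector case via Khintchine's transference (Proposition \ref{transference}) and then quote the bound already established in Corollary \ref{codimbadd}. The advantage is that the hard work---producing a quantitative HAW strategy for $\Bad_{m,1}$ through the simplex lemma---has been done in Theorem \ref{badd}, and transference lets us leverage it for $\Bad_{1,n}$ at the price of a slightly worse exponent.

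First, I would apply Proposition \ref{transference} with $m=1$: since $m+n-1 = n$, this yields the inclusion
\[
\Bad_{1,n}(c) \supset \Bad_{n,1}(c'), \qquad c' = \const(1,n)\cdot c^{1/n}.
\]
Passing to Hausdorff codimensions in $\R^n$ reverses the inclusion, giving
\[
\codim\bigl(\Bad_{1,n}(c)\bigr) \le \codim\bigl(\Bad_{n,1}(c')\bigr).
\]
Next, I would invoke Corollary \ref{codimbadd} with $m$ replaced by $n$, obtaining
\[
\codim\bigl(\Bad_{n,1}(c')\bigr) = O\!\left(\frac{(c')^{1/(2n)}}{\log(1/c')}\right)
\]
as $c' \to 0$ (equivalently as $c\to 0$, since $c'$ and $c$ tend to zero together).

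The final step is routine substitution and asymptotic simplification. Writing $c' = \const(1,n)\cdot c^{1/n}$, I compute $(c')^{1/(2n)} = \const\cdot c^{1/(2n^2)}$ and $\log(1/c') = \tfrac{1}{n}\log(1/c) - \log\const(1,n) \sim \tfrac{1}{n}\log(1/c)$; the multiplicative factor $n$ is absorbed into the implicit constant of the $O$-notation. There is no substantive obstacle here: the only point requiring mild care is ensuring that the explicit constant $\const(1,n)$ from Proposition \ref{transference}, which depends only on $n$, does not affect the asymptotic behavior, and this is clear once $n$ is fixed. The loss in exponent from $1/(2n)$ in the vector case to $1/(2n^2)$ in the linear-form case is inherited directly from the transference exponent $1/(m+n-1) = 1/n$, which explains the gap between the $\Bad_{n,1}$ and $\Bad_{1,n}$ bounds.
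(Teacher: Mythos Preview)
Your proposal is correct and follows essentially the same route as the paper: apply the transference inclusion from Proposition~\ref{transference} with $m=1$ to obtain $\Bad_{1,n}(c)\supset\Bad_{n,1}(c')$ with $c'=\const(1,n)\,c^{1/n}$, then invoke the vector-case bound (the paper cites Theorem~\ref{badd}, you cite its consequence Corollary~\ref{codimbadd}) and simplify the exponents. The substitution $(c')^{1/(2n)}\asymp c^{1/(2n^2)}$ and $\log(1/c')\sim \tfrac{1}{n}\log(1/c)$ is handled correctly.
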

One can also use the hyperplane absolute game to obtain a dimension bound
	for general $\m, \n \in \N$.

\begin{thm}
\label{linearforms}
For $\m, \n \in \N$,  Theorem \ref{thmlower} holds with
\eq{defp}{p = p(\m ,\n ) \df \left(\m (\m +\n ) + \n (\m +\n )^3\right)\cdot\max\left\{\frac{4\n +1}{\m }, \frac{4\m +1}{\n}\right\}.}
%For $\m, \n \in \N$, if $\n \ge \m$, let
%$$p = p(\m ,\n ) = \frac{4\n +1}{\m }[\m (\m +\n ) + \n (\m +\n )^3],$$
%and otherwise take $p(\m, \n) = p(\n, \m)$.
%For any $\m , \n  \in \N$,
In other words, with $p$ as in \equ{defp} one has
$\codim\big(\Bad_{\m , \n }(c)\big) = O\left(\frac{c^{1/p}}{\log(1/c)}\right)$ as $c \to \infty$.
\end{thm}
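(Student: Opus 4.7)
\medskip
\noindent\textbf{Proof sketch.} The plan is to play the hyperplane absolute game on $\mathcal{M}_{m\times n}\cong \R^{mn}$ and show that $\Bad_{m,n}(c)$ is $(\x_0,\rho_0,\beta)$-HAW whenever $\beta\ge C(m,n)\,c^{1/p}$, with $p$ as in \equ{defp}. Once this is established, Corollary \ref{codimbound} immediately yields
$$\codim\big(\Bad_{m,n}(c)\big) \le c_{mn}(\beta) = O\!\left(\frac{\beta}{\log(1/\beta)}\right) = O\!\left(\frac{c^{1/p}}{\log(1/c)}\right),$$
as claimed. Thus, the entire task is to design a winning strategy for Alice with the correct quantitative dependence of $\beta$ on $c$.

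The structural obstacle relative to Theorem \ref{badd} is this: for a single denominator vector $\q\in\Z^n$, the set of $A\in\mr$ with $\|A\q-\p\|^m \|\q\|^n<c$ for some $\p\in\Z^m$ is a $c^{1/m}\|\q\|^{-n/m-1}$-neighborhood of an affine subspace of codimension $m$ in $\R^{mn}$, not a hyperplane. Alice can only delete one hyperplane per turn, so she cannot block a single such $\q$ in one move. My plan is therefore to group Alice's turns into \emph{blocks}: in a block of $\ell$ consecutive turns (with $\ell$ comparable to $mn$) the ball shrinks from radius $\rho_k$ to $\rho_{k+\ell}\approx \beta^\ell \rho_k$, and Alice must block all rational pairs $(\q,\p)$ whose scale of danger falls into the corresponding dyadic window of denominators. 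Using a matrix version of Davenport's simplex lemma (derived by applying Lemma \ref{simplex} either in the ``$\q$'' space $\R^n$ or in the image space $\R^m$, depending on which ratio $(4n+1)/m$ or $(4m+1)/n$ dominates), the integer vectors $\q$ with $\|\q\|$ in the relevant window, together with the corresponding optimal $\p$, give rise to a collection of affine subspaces that can be covered by a bounded number of hyperplanes, which Alice enumerates over her turns in the block.

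Concretely, I would proceed as follows. (i) Fix $\beta<1/3$ and $B_0=B(\x_0,\rho_0)$; without loss of generality take $\rho_0=1$. (ii) For each $k\ge 0$, define a threshold $Q_k$ for denominators so that the event $\|q\|^n\|A\q-\p\|^m < c$ with $\|\q\|\in[Q_{k-1},Q_k]$ first becomes capable of having its witnesses sit inside $B_k$; the relation between $Q_k$, $\rho_k=\beta^k\rho_0$, and $c$ will dictate everything. (iii) Over the block of turns indexed by $k$, apply the simplex lemma in either $\R^m$ or $\R^n$ (chosen to minimize the number of hyperplanes needed—this is the source of the $\max$ in \equ{defp}) to confine the dangerous $(\q,\p)$ to a union of $O((m+n)^{c_1})$ affine subspaces, and have Alice cycle through the corresponding hyperplanes. (iv) Verify that the distance estimates produced by conditions (1)--(2) of the HAW game, combined with the simplex-lemma gap, force every $A\in \cap_i B_i$ to satisfy \equ{symmetricBA} with the prescribed $c$. (v) Track the constants: the factor $m(m+n)+n(m+n)^3$ accounts for the dimension of matrix space times the number of refinement stages needed per denominator scale, while the $\max\{(4n+1)/m,(4m+1)/n\}$ factor accounts for the geometric ratio $Q_k/Q_{k-1}$ versus $\rho_{k-1}/\rho_k$.

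The hardest step will be (iii)--(iv): producing a clean matrix simplex lemma that reduces the codimension-$m$ obstruction to finitely many hyperplanes with a polynomial bound on the number, and then bookkeeping the gap between $\beta\rho(B_k)$ (the forbidden neighborhood of Alice's hyperplane) and the $c$-neighborhood of the bad affine subspace that it must cover. Everything else is routine: optimizing the exponent amounts to equating the two constraints arising from choosing to reduce along $\R^m$ versus along $\R^n$, and the resulting exponent is precisely \equ{defp}. Once $\beta \ge C(m,n)\, c^{1/p}$ guarantees Alice wins, Corollary \ref{codimbound} closes the argument. \hfill$\square$
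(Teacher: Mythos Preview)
Your overall architecture is right: show that $\Bad_{m,n}(c)$ is $(\x_0,\rho_0,\beta)$-HAW once $\beta \ge C(m,n)c^{1/p}$, then invoke Corollary~\ref{codimbound}. But the proposed winning strategy has a real gap. You plan to handle the codimension-$m$ obstruction by a ``matrix simplex lemma'' that packages the dangerous $(\q,\p)$ into boundedly many hyperplanes, applying Lemma~\ref{simplex} either in $\R^m$ or in $\R^n$ depending on which of $(4n+1)/m$, $(4m+1)/n$ is larger. No such direct reduction is known to work: in a given denominator window the number of relevant $\q$'s grows polynomially, and a straight simplex-lemma count on either side does not collapse them to $O(1)$ hyperplanes with the claimed exponents.

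What the paper (following Schmidt \cite{S2} and \cite{BFS}) actually does is quite different. One introduces an auxiliary parameter $R$ and a coupled pair of systems of inequalities, one for $A$ and one for the transpose $A^T$, and proves by an \emph{alternating} induction that Alice can prevent integer solutions to both. At each step the absence of solutions for $A^T$ at scale $j$ is what allows Alice to block solutions for $A$ at scale $j+1$, and vice versa; this is Schmidt's transference trick, not a simplex lemma. The $\max\{(4n+1)/m,(4m+1)/n\}$ comes from the two halves of this alternation (each half imposes its own lower bound on $R$ in terms of $\beta$), and the factor $m(m+n)+n(m+n)^3$ arises when one converts the $R$-parametrized system back into a $\Bad_{m,n}(c)$ statement via $c = R^{-m(m+n)-n(m+n)^3}$. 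Neither factor has the interpretation you give (``dimension of matrix space times refinement stages'' and ``geometric ratio of $Q_k$''). So as written, steps (iii)--(iv) of your outline would not go through; you would need to replace the hypothetical matrix simplex lemma with the Schmidt/BFS dual induction and redo the bookkeeping for $R$ versus $\beta$ and $R$ versus $c$.
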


\begin{remark}
Note that the % constant $p = p(\m, \n)$ 
expression  \equ{defp} is not symmetric in $\m$ and $\n$, 
so that when $\n > \m$, we get a weaker bound for $\Bad_{\m, \n}(c)$
than we do for $\Bad_{\n, \m}(c)$. However, Proposition \ref{transference}
does not help in this case, as $(\m+\n-1)p(\n,\m) \ge p(\m,\n)$ for any
$\m, \n \in \N$.
\end{remark}

\begin{proof}[Sketch of proof of Theorem \ref{linearforms}] The method of proof  constitutes a quantitative refinement of the argument of  \cite{S2} and \cite{BFS}; thus we will only give a sketch. 
In \cite{S2}, Schmidt proved that the set $\Bad_{\n, \m}$ is 
winning, and 
in \cite{BFS}, by applying the same scheme, this set was shown to be HAW.
In both cases, the %strategy hinges on a ``complicated lemma" (Schmidt's terminology, 
basic strategy
(see \cite[\S 4]{S2} and \cite[Lemma 5.3]{BFS})
%That lemma is used to show that 
is to prove that for any $\beta > 0$ there exists $R > 0$ such that the set $\ca_R$ of matrices in $\mr$, for which a certain system\footnote{see \cite[(5.23)--(5.26)]{BFS}} of inequalities involving $R$ has no nontrivial integer solutions, is 
%$\beta$-HAW
winning for the game played with parameter $\beta$. One can show that the proof  in \cite{BFS}  implies that the set $\ca_R$ is $(\mathbf{0},1,\beta)$-HAW, where   $R$ 
%in the statement of that lemma 
can be taken to be equal to
$K \beta^{-\ell(m,n)}$,
with $K$ depending only on $m$, $n$ and the initial ball of the game (but not $\beta$),
and with $$\ell(m,n) = \max\left\{\frac{4\n +1}{\m }, \frac{4\m +1}{\n}\right\}.$$
Furthermore, $\ca_R$ can be shown to be a subset of $\Bad_{m,n}(c)$, where 
$$c = K^{\prime}(m,n) \cdot \beta^{-\ell(n(m+n) - m(m+n)^3)}$$ (here $K^{\prime}$ depends only on $m$ and $n$). By Corollary \ref{codimbound}, 
Theorem \ref{thmlower} follows.
\ignore{
(works for the set of $A$ that satisfy 
(\ref{system3})--(\ref{system4}) in addition to (\ref{system1})--(\ref{system2}), 
as this is required for the induction argument in the proof
to continue. To obtain an estimate on the parameter $\beta$ we may choose
for a given approximation constant $c$, we state a quantitative version of a lemma in 
\cite{BFS}. 
This theorem follows from the arguments in \cite{BFS}:

The proof of the main theorem in that paper then implies that 
$\Bad_{m,n}(c)$ is $(\mathbf{0},1,\beta)$-HAW, where 
$c = K^{\prime}(m,n) \cdot \beta^{-\ell(n(m+n) - m(m+n)^3)}$. By Colollary \ref{codimbound}, 
Theorem \ref{thmlower} follows.}
\end{proof}

\ignore{In order to prove Theorem \ref{linearforms} 
we must state some lemmas and theorems from \cite{BFS},
being more explicit about the choices of parameters than in that paper. 
Let us introduce some notation for this purpose. Denote the entries of $A \in \eM_{\m \times \n}$
by $\gamma_{ij}$, and let $A_i = (\gamma_{i1}, \dots, \gamma_{i\n }, {\mathbf e}_i) \in \R^{\n +\m }$,
where ${\mathbf e}_i$ is the $i$th standard basis vector of $\R^\m$. 
For each ${\mathbf X} \in \R^{\n +\m }$,
let $\mathbf{x} \in \R^\n$ be the projection on the first $\n $ coordinates.
Define ${\mathcal{A}}({\mathbf X}) = (A_1\cdot {\mathbf X}, \dots, A_\m \cdot {\mathbf X})$.
Analogously, for each $\mathbf Y \in \R^{\n+\m}$ denote by $\y$ its projection onto the first $\m$
coordinates, and write ${\mathcal B}(\mathbf Y) = (B_1\cdot \mathbf Y, \dots, B_\n \cdot \mathbf Y)$,
where $B_i = (\gamma_{1i}, \dots, \gamma_{\m i}, \mathbf e_i)$ for $1 \le i \le \n$.
Finally, set $\lambda = \frac{\n }{\n +\m }$ and for each $R > 1$ let $\delta = R^{-\n (\n +\m )^2}$
and $\delta^T = R^{-\m (\n + \m)^2}$.
In \cite{S2}, it is shown that $A$ is badly approximable whenever a certain system of equations
has no integer solutions:

\begin{lem}
\label{systemBad}
Fix $\m,\n  \in \N$ and $R > 1$.
If the system of equations
\begin{align}
& 0 < \|\mathbf x\| < \delta R^{\m (\lambda + i)}\label{system1} \\
& \|{\mathcal A}(\mathbf X)\| < \delta R^{-\n (\lambda+i)-\m }\label{system2}
\end{align}
has no
integer solution $\mathbf X$ for each integer $i \ge 0$, then $A \in \Bad_{\m ,\n }(c)$,
for $c = R^{-\m (\m +\n ) - \n (\m +\n )^3}$.
\end{lem} 

See also Observation 1 in \cite{BFS}.
Similarly,
if the system 
\begin{align}
& 0 < \|\mathbf y\| < \delta^T R^{\n (1 + j)}\label{system3} \\
& \|{\mathcal B}(\mathbf Y)\| < \delta^T R^{-\m (1+j)-\n }\label{system4}
\end{align}
has no
integer solution $\mathbf X$ for each integer $i \ge 0$, then $A^T \in \Bad_{\n ,\m }(c)$,
for $c = R^{-\n (\m +\n ) - \m (\m +\n )^3}$.

We include the short proof of Lemma \ref{systemBad} for the convenience of the reader.

\begin{proof}
First note that $A \in \Bad_{\m ,\n }(c)$ if and only if
for each $\mathbf X \in \Z^L$ with $\x \neq 0$,
$$\|\x\|^\n \|{\mathcal A}(\mathbf X)\|^\m \ge c.$$
Now suppose 
 (\ref{system1})--(\ref{system2}) has no
integer solutions
and fix $\mathbf X \in \Z^L$ with $\x \neq 0$.
Then $\|\x\| \ge 1 \ge \delta R^{\m (\lambda - 1)}$, so
there exists a unique integer $i \ge 0$ such that
$$\delta R^{\m (\lambda + i -1)} \le \|\x\| < \delta R^{\m (\lambda + i)}.$$
But then, since $\mathbf X$ is a solution to (\ref{system1}), by assumption
it can't be a solution to (\ref{system2}), so we have
$$\|{\mathcal A}(\mathbf X)\| < \delta R^{-\n (\lambda+i)-\m }.$$
Thus,
\begin{align*}
\|x\|^n \|{\mathcal A}(\mathbf X)\|^\m 
		& \ge \delta^\n R^{(\m \n (\lambda + i - 1)} \delta^\m R^{-\n \m (\lambda + i)-\m^2}\\
			&= \delta^{\n +\m } R^{-\m^2-\m \n} = R^{-\m (\m+\n) - \n(\m+\n)^3}.
\end{align*}
\end{proof}

In \cite{S2}, Schmidt proved that the set of $A \in \eM_{\m\times \n}$ for which the system (\ref{system1})--(\ref{system2}) has no integer solutions for some $R > 1$ and for each $i \ge 0$ is 
a winning set.
In \cite{BFS}, this set is shown to be HAW.
In both cases, the strategy works for the set of $A$ that satisfy 
(\ref{system3})--(\ref{system4}) in addition to (\ref{system1})--(\ref{system2}), 
as this is required for the induction argument in the proof
to continue. To obtain an estimate on the parameter $\beta$ we may choose
for a given approximation constant $c$, we state a quantitative version of a lemma in 
\cite{BFS}. Given $R > 0$, we introduce the following notation
for a particular play of the game:
For each $i \in \N$, let
$k_i$ be the minimal integer such that $\rho(B_{k_i}) < R^{-(\n + \m)(\lambda + i)}$,
and for $j \in \N$ let $h_j$ be the minimal integer such that $\rho(B_{h_j}) < R^{-(\n+\m)(1+j)}$.
The following two lemmas give a bound on how large $R$ needs to be in
order to guarantee that Alice may play in such a way that for all $A \in B(k_i)$
the system \ref{system1}-\ref{system2} has no solution in $\X$ and for all
$A \in B(h_j)$ the system \ref{system3}-\ref{system4} has no solutions in $\Y$.

\ignore{
\begin{lem}[Lemma 6.1 in \cite{BFS}]
For all $0<\beta<\frac13$, for all $\sigma\in\R$, and for all $0\leq v \leq \n$ there exists
\begin{align*}
\nu_v &= \nu_v(\m,\n,\beta,\sigma) > 0
\end{align*}

\noindent such that for any $0 < \mu_v \leq \nu_v$ and for any set of orthonormal vectors $\mathcal{Y} = \Y_1,...,\Y_N \subseteq \mathbb{R}^L$, Alice can win the following finite game:

\begin{itemize}
\item Bob plays a closed ball $B\subset\mathbb{R}^H$ satisfying $\rho_B \df \rho(B) < 1$ and $\max_{A\in B}\|A\| \leq \sigma$.
\item Alice and Bob play the Hyperplane Absolute Winning game until the radius of Bob's ball $B_v$ is less than $\mu_v\rho_B$.
\item Alice wins if for all $A\in B_v$, we have
\begin{equation}
M_v(A) > \nu_v\rho_B M_{v - 1}(B_v).
\label{46}
\end{equation}
\end{itemize}
Furthermore, we have
$$\nu_\n  \ge \text{const}(\m , \n, \sigma, \rho) \beta^{4\n }.$$
\label{hard theorem}
\end{lem}
}

\begin{lem}[Lemma 5.3 in \cite{BFS}, part 1]
\label{Rchoice1}
Let $B(\x_0, \rho)$ be Bob's initial choice in the game and set $\sigma = \|\x_0\| +\rho$.
Then there exists $K_1 = K_1(\m,\n, \sigma,\rho)$ such that if
$R \ge K_1\beta^{-(1+4\n)/\m}$,
then when the game is at stage $k_i$, if, for all $A\in B(k_i)$, the system of equations \textup{(\ref{system1})}, \textup{(\ref{system2})} has no integer solution $\X$, and the system of equations \textup{(\ref{system3})}, \textup{(\ref{system4})} with $j = i - 1$ has no integer solution $\Y$, then Alice has a strategy ensuring that when the game reaches stage $h_i$, then for all $A\in B(h_i)$ the system of equations \textup{(\ref{system3})}, \textup{(\ref{system4})} with $j = i$ will have no integer solution $\Y$.
\end{lem}

\begin{proof}
In the proof of Lemma 5.3 in \cite{BFS},
the conclusion is proved for
$$\max\{(\n \sqrt{\n })^{1/\m }\beta^{-1/\m }\nu_\n ^{-1/\m }, R_1, \rho^{-1/\\m}\},$$
where $R_1$ is as in Lemma 5.1 of \cite{BFS} and is therefore independent of $\beta$,
and $\nu_n$ is as in Lemma 6.1 in \cite{BFS}. The proof of this latter lemma
gives $\nu_n = \text{const}(\m , \n, \sigma, \rho) \beta^{4\n }$, so the statement follows.
\end{proof}

Arguing similarly, we also get

\begin{lem}[Lemma 5.3 in \cite{BFS}, part 2]
\label{Rchoice2}
Let $\x_0$, $\rho$, and $\sigma$ be as in Lemma \ref{Rchoice1}.
Then there exists $K_2 = K_2(\m,\n,\sigma, \rho)$ such that if
$R \ge K_2\beta^{-(1+4\m)/\n}$,
then when the game is at stage $h_j$, if, for all $A\in B(h_j)$, the system of equations \textup{(\ref{system3})}, \textup{(\ref{system4})} has no integer solution $\Y$, and the system of equations \textup{(\ref{system1})}, \textup{(\ref{system2})} with $i = j$ has no integer solution $\X$, then Alice has a strategy ensuring that when the game reaches stage $k_{j+1}$, then for all $A\in B(k_{j+1})$ the system of equations \textup{(\ref{system1})}, \textup{(\ref{system2})} with $i = j + 1$ will have no integer solution $\X$.
\end{lem}
}

\ignore{
for a given $\beta > 0$, the corresponding $R$ 
is chosen in Lemma 5.3, as the maximum of
values chosen in its two parts. Let $B(\sigma,\rho)$ be Bob's initial choice in the game.
In part one of the lemma, the required value of $R$
is, in the notation of \cite{BFS},
$$\max\{(\n \sqrt{\n })^{1/\m }\beta^{-1/\m }\nu_\n ^{-1/\m }, R_1, \rho^{-1/\\m}\},$$
where 
$R_1$ is independent of $\beta$
and $\nu_\n  \ge \text{const}(\m , \n, \sigma, \rho) \beta^{4\n }$
 (see Proposition 5.1 and the proof of Lemma 6.1 respectively).
Hence, this $R$ can be taken to be $\text{const}(\m, \n, \sigma, \rho)\beta^{-(1+4\n)/\m}$.
The second part of the lemma can be proved similarly, and will require a value of
$R$ that is, again up to a constant depending on $\m$, $\n$, $\sigma$, and $\rho$, 
$\beta^{-(1+4\m)/\n}$.
}

\ignore{
With this quantitative statement of Lemma 5.3 from \cite{BFS},
the proof of Theorem 1.3 in \cite{BFS} implies
the following:

\begin{thm}[Theorem 1.3 in \cite{BFS}]
\label{systemHAW} Given $\m,\n  \in \N$, denote 
$$\ell(\m, \n) = \max\left(\frac{1}{\m }(1+4\n ), \frac{1}{\n}(1+4\m)\right)\,.$$
Then there exists a constant $K_{\m ,\n }$ depending only on $\m $ and $\n $ such that 
the set 
\begin{equation*}
\left\{ A \in \eM_{\m  \times \n }\left|
\begin{aligned}
\text{the system (\ref{system1})--(\ref{system2}), with }R = K_{\m , \n } \beta^{-\ell(\m, \n)},\  \,\\ \text{ has no integer solutions
$\mathbf X \in \Z^{\m +\n }$ for each }i \ge 0\end{aligned}
\right.\right\}\end{equation*} is a $(\mathbf 0, 1,\beta)$-HAW subset
of $\R^{\m\n }$.
\end{thm}

Combining Theorem \ref{systemHAW} and Lemma \ref{systemBad} 
and applying Corollary \ref{codimbound}, 
we obtain Theorem \ref{linearforms}, and thus also Theorem \ref{thmlower}.
}

\ignore{
\subsection{Lower dimension bounds for escaping sets}
\label{escaping}

Let $\T^d \df \rd/\Z^d$ be the $d$-dimensional torus, and let $\pi:\rd\to\T^d$ denote the natural projection.
Given a nonsingular semisimple 
matrix $R\in \GL_d(\Q) \cap \eM_{d  \times d }(\Z)$ and a point $y \in \T^d$, define the set $$
\tilde E(R,y) \df \left\{\x\in \rd: d(\pi(R^k\x), y) \ge \delta\text{ for all }k\in\N\right\};$$
in other words, a lift to $\rd$ of the set of points of $\T^d$ whose
orbits under the endomorphism $f_R$ of $\T^d$ induced  
by $R$, 
$$f_R(x) \df \pi(R\x)\text{ where }\x\in \pi^{-1}(\x) \,,$$
 does not enter the $\delta$-neighborhood of $y$. 
 The set $\tilde E(R,y) = \bigcup_{\delta > 0} \tilde E(R,y,\delta)$
 has measure zero whenever $f_R$ is ergodic.
 It was shown by Dani to be winning in the case that $y \in \Q^d/\Z^d$ (see \cite{dani}).
 In \cite{BFKRW} it was shown to be a HAW set for any $y \in \T^d$.
In particular,  $\dim\big(\tilde E(R,y) \big) = d$, so
$\dim\big( \tilde E(R,y,\delta) \big) \rightarrow 1$ as $\delta \to 0$.
The rate of this convergence is studied (in more generality) in \cite{AN}, where it is shown that
$\dim\big( \tilde E(R,y,\delta) \big) \geq 1 - \frac{1}{\log|\log\delta|}$ and in certain cases
$\dim\big( \tilde E(R,y,\delta) \big) \geq 1 - \frac{1}{|\log\delta|}$.
For the maps $R$ discussed above, and $y \in \Q^d/\Z^d$ we are able to improve this bound:

\begin{thm}
\label{escapingwin}
For any $R$ as above and $y \in \Q^d/\Z^d$, there exists $p > 0$ such that 
$$\codim\big( \tilde E(R,y,\delta) \big) = O\left(\frac{\delta^{p}}{\log(1/\delta)}\right).$$
In the case $d =1$ we may extend the above to all $y$ which are not Liouville.
\end{thm}

To prove this, we rely on Theorem \ref{dimbound2} and the fact that $\tilde E(R,y,\delta)$ is
$\beta$-HAW for some $\beta > 0$, as was shown in \cite{BFKRW}. To get a precise bound though,
we need to obtain an explicit relationship between $\delta$ and $\beta$. This is why we restrict
ourselves to $y \in \Q^d/\Z^d$.

\begin{proof}
Let $\lambda$ be the %maximum absolute value of the eigenvalues 
spectral radius of $R$.
If $\lambda = 1$, then obviously
every eigenvalue of $R$ must have modulus $1$.
By a theorem of Kronecker \cite{Kro}, they must be roots of unity,
so there exists an $\n \in\N$ such that the only eigenvalue of $R^\n $ is $1$. 
Thus $R^\n = I$.
Hence, for any $y \in\T^d$, $$\tilde E(R^\n ,y, t) \supset 
\R^d 
\smallsetminus 
(B(\y,t)+\Z^d)\,,$$
where $\y$ is an arbitrary vector in $\pi^{-1}(y)$.
Furthermore, since the eigenvalues of $R$ all have absolute value $1$, there is a constant $M$
such that $\frac{1}{M} \|\mathbf{v}\| \le \|R^{-i}\mathbf{v}\| \le M\|\mathbf{v}\|$ for all $i \in \N$.

Hence, if $\rho = \frac{1}{3M}$ and $t < \frac{\rho}{3M}$, then 
$B_0$ intersects at most one ball of the form $B(R^{-i}(\y), Mt)$.
So Alice can force $B_2 \subset \tilde E(R,y,t)$ as long as $\beta \ge \frac{Mt}{\rho}$.
Thus, for $\rho = \frac{1}{3M}$ and for each $t < \frac{\rho}{3M}$, $\tilde E(R,y,t)$ is 
$(\x,\rho,\beta)$-HAW
whenever $\x\in\rd$ and $\beta \ge \frac{Mt}{\rho}$,
so by Corollary \ref{codimbound}, $\codim\left(\tilde E(R,y,t)\right)$ is $O\left(\frac{t}{\log(1/t)}\right)$.\footnote{Note that the rationality of $y$ was not used in this case; for a matrix with spectral radius
$1$, $y$ can be arbitrary and we may take $p=1$.}

Otherwise let $\ell \in\N$ be the smallest integer such that $\lambda^{-\ell} < \beta$,
 let $$Q = |\det (R)| \in \Z$$ and let $a = Q^{-\ell}$.
Then $R^{-j}(\Z^d)\subset a\Z^d$ for $j\in\{0,1,\dots,\ell\}$.
Let 
$b > 0$ be such that
$R^{-j}\big(B(0,1)\big)\subset B(0,b)$ for $0 \leq j \leq \ell$.
Note that $\ell = \log_{\lambda}(\beta^{-1}) + 1 \ge C |\log \beta|$, where $C = \frac{1}{\log \lambda}$ is independent of $\beta$, and that $b$ can be taken to be $C^{\prime}\lambda_0^{-\ell}$, where
$\lambda_0$ is the absolute value of the smallest eigenvalue of $R$ and $C^{\prime}$ is independent of $\beta$.

Let $V$ and $W$ be the largest $R$-invariant subspaces on which all eigenvalues
have absolute value equal to, and less than, $\lambda$ respectively.
Then, since $R$ is semisimple, $\R^d = V \oplus W$. 
Since the eigenvalues of $R |_V$ are of absolute value $\lambda$
and $R$ is semisimple, there exists $\delta_1 > 0$ such that, for all $\vv\in V$ and $j\in \N$,
\begin{equation}
\label{delta_1}
\|R^{-j}\vv\| \leq \delta_1\lambda^{-j}\|\vv\|.
\end{equation}
Similarly, since all eigenvalues of $R^{-1}$ have absolute value at least $\lambda^{-1}$
and $R$ is semisimple, there exists $\delta_2 > 0$ such that for all $\vu\in \R^d$ and $j\in\N$ 
\begin{equation}
\label{delta_2}
\|R^{-j}\vu\| \geq \delta_2\lambda^{-j}\|\vu\|.
\end{equation}

Again, choose  an arbitrary vector $\y\in\pi^{-1}(y)$. Let $t_0$ be the minimum positive value of $\frac{1}{3b}\dist\big(\y - R^{-j}(\y),a \mathbf{z}\big)$,
ranging over $j \in \{0,1,\dots,\ell\}$ and $\mathbf{z} \in \Z^d$.
Note that $\y - R^{-j}(\y) \in \frac{1}{qQ^{\ell}}\Z$, where $\y = \p/q$.
Hence, 
\begin{equation}
\label{t0}
t_0 \ge \frac{1}{3bqQ^{\ell}}.
\end{equation}

\ignore{
$$t = t(j_1,j_2,m_1,m_2) = \frac{\left\|\left(R^{-j_1}(y)+am_1\right) 
-\left(R^{-j_2}(y)+am_2\right)\right\|}{b}
$$
}
Then since $b\geq 1$, by the triangle inequality we have that,
for any $\vm_1, \vm_2 \in \Z^d$ and $0 \le j \le \ell$ such that $\y+\vm_1 \neq \R^{-j}(\y+\vm_2)$,
%\begin{equation}
\begin{align}
\label{distance}
%\begin{split}
\dist \big(B(\y+\vm_1, t_0), & \ R^{-j}(B(\y+\vm_2,t_0))\big) \nonumber \\
& \geq \dist\left(B(\y+\vm_1, bt_0), B(R^{-j}(\y+\vm_2),bt_0)\right) \geq t_0
%\end{split}
%\end{equation}
\end{align}
\ignore{
\begin{equation}
\label{distance}
\dist\left(R^{-j_1}(\y) + B\left(0,bt_0\right)+a\Z^d,
R^{-j_2}(\y)+B\left(0,bt_0\right)+a\Z^d\right) 
\geq t_0
\end{equation}
whenever $R^{-j_1}(\y)\ne R^{-j_2}(\y)$.
\ignore{Let $t_0$ be the minimum of $\frac13t(j_1,j_2,m_1,m_2)$, 
ranging over all $(j_1,m_1)$ and $(j_2,m_2)$ with $t(j_1,j_2,m_1,m_2) \neq 0$.
}
}
Let $k, j_1, j_2\in\Z_+$ be such that $j_1 \leq j_2$ and 
$\beta^{-k} \leq \lambda^{j_i} <\beta^{-(k+1)}$.
Note that, by our choice of $l$, $0 \le j_2-j_1 \leq \ell$.
By (\ref{delta_2}) and (\ref{distance}),
%and (\ref{distance})
for any $0 < t < t_0$ and $\vm_1,\vm_2 \in \Z^d$ such that 
$\y+\vm_1 \neq \R^{-j_2+j_1}(\y+\vm_2)$,
\begin{equation}
\label{distance2}
\dist\left(R^{-j_1}\big(B(\y+\vm_1,t)\big),R^{-j_2}\big(B(\y+\vm_2,t)\big)\right)
%\geq \delta_2 \lambda^{-lk} t_0 
\geq \delta_2t_0\beta^{k+1}.
\end{equation}

Let $p : \rn \to V$ be the projection onto $V$ parallel to $W$, and let
 $M = \|p\|_{op}$ (with respect to the Euclidean norm).
Let us assume that Bob's initial move has radius $\rho = \frac{\beta}{2}\delta_2t_0$. 
Choose a subsequence of moves $B_{jk}$ such that $B_{jk}$ has radius smaller than $\frac12\delta_2t_0\beta^{k+1}$,
so by (\ref{distance2}) 
%it intersects nontrivially with at most one 
if it intersects two sets of the form
$R^{-j_i}\big(B(\y+\vm_i,t)\big)$
 with 
$\beta^{-k} \leq \lambda^{j_i} < \beta^{-(k+1)}$, $j_1 \le j_2$, and $\vm_i\in\Z^d$,
then we must have $\y+\vm_1 = R^{-j_2+j_1}(\y+\vm_2)$.
Then by our choice of $b$,
$$R^{-j_2}\big(B(\y+\vm_2,t)\big) \subset R^{-j_1}\left(B\big(R^{-j_2+j_1}(\y+\vm_2),bt\big)\right) = 
	R^{-j_1}\big(B(\y+\vm_1,bt)\big)\,.$$
Thus all sets of the above form that intersect $B_{jk}$ must be contained in a single
set of the form $R^{-j}\big(B(\y+\vm,bt)\big)$.
Since
$\diam\big(p(B(\y,bt))\big) \leq 2Mbt$, we have by (\ref{delta_1}) that
for $t = \min\left(t_0,\frac{\beta\rho}{2Mb\delta_1}\right)$,
$$\diam\left(p\big(R^{-j}(B(\y+\vm,t))\big)\right)
%=\text{diam}(\pi(R^{-j}(B(y,t))))
\leq 2\delta_1Mbt\lambda^{-j} \leq 2\delta_1Mbt\beta^{k} \leq \beta^{k+1}\rho,$$
so each $R^{-j}\big(B(\y+\vm,t)\big)$ intersecting $B_{jk}$ 
is contained in $\mathcal{L}^{(\varepsilon)}$,
where $\mathcal{L}$ contains a translate of $W$ and $\varepsilon \leq \beta^{k+1}\rho$.

\ignore{
By our choice of $\beta$, 
\begin{equation}
\label{decaying1}
\mu\left(B(\mathcal{B}_k) \cap \mathcal{L}^{(2\varepsilon)}\right) < D\mu(B(\mathcal{B}_{k})) ,
\end{equation}
and by Definition \ref{decaying} (ii) 
\begin{equation}
\label{decaying2}
\mu(B(x_k,(1-\alpha)\beta^k\rho)) > D\mu(B(\mathcal{B}_k)),
\end{equation}
where $x_k$ is the center of $B(\mathcal{B}_k)$.

By (\ref{decaying1}) and (\ref{decaying2}), 
there exists a point $x\in\text{ supp }\mu \cap B(\mathcal{B}_k)$ such that, for any $m\in\Z^d$,
$$\dist(x,R^{-j}(B(y,t)+m)) > \alpha\beta^k\rho\,\,\,\, \text{and}\,\,\,\,
\dist(x,\partial B(\mathcal{B}_{k+1})) \geq \alpha\beta^k\rho.$$
Alice will choose this point as the center of $\mathcal{W}_{k+1}$.
}

On her $(k+1)$-st move, Alice will remove $\mathcal{L}^{(\varepsilon)}$.
By induction, Alice will play in such a way that $\x \in \cap B_{jk}$
satisfies $\|R^j(\x)-\vm-\y\| \geq t$ for all $\vm\in\Z^d$ and $j \in \Z_+$.
Hence, $\x \in \tilde E(R,y)$, and Alice wins.

Now observe that by (\ref{t0}), up to a multiplicative constant independent of $\beta$ we have
$$t \ge \frac{\beta^2 t_0}{b} \ge \beta^2Q^{C|\log \beta|}\lambda_0^{-2C|\log\beta|}
					\ge \beta^{1/p},$$ for some $p >0$.
So for any $\x\in \R^d$ and any $\beta \ge t^{p}$, $\tilde E(R,y,t)$ is $(\x,\rho,\beta)$-HAW for some
$\rho > 0$. Hence, by Corollary \ref{codimbound}, $\codim\big(\tilde E(R,y,t)\big)$ is $O\left(\frac{t^p}{\log(1/t)}\right)$.

Furthermore, the rationality of $y$ was only used to obtain a bound on $t_0$ in terms
of $\ell$. If $d=1$ and $y$ is not Liouville, then $R \in \N$ and there is an integer $m$ such that
for all $\ell \in \N$ and all $0 \le j \le \ell$
\begin{align*}
\dist(\y - R^j \y, R^{-\ell}\Z) &= \max_{k\in \Z} |\y(1-R^j) -R^{-\ell}k| \\
&= (R^j-1)\max_{k\in\Z} \left|\y - \frac{k}{R^{\ell} (R^j-1)}\right| \ge \frac{1}{[(R^{\ell}-1)R^{\ell}]^m},
\end{align*}
so $t_0 \ge \frac{1}{3b} R^{-2\ell m}$. The rest of the argument then goes through similarly.
\end{proof} 
}

\section{Upper estimates}\label{dmitry}

In this section we prove Theorem \ref{upper}.
We begin by recasting the definition of $\Bad_{\m ,\n }(c)$ using homogeneous dynamics. 
In this section $\|\cdot\|$ will always denote the supremum norm
and all distances in $\R^k$ will be induced from this norm, so balls will in fact be cubes. 
Let % $G = \SL_{\m +\n }( \R)$, $\Gamma = \SL_{\m +\n }( \Z)$, $X = \ggm$.
$G$, $\Gamma$ and $X$ be as in \equ{ggm}. Fix a right-invariant Riemannian metric on $G$ and let `$\dist$' denote the associated distance
function, both on $G$ and on $X$.
 Also let  $u_A$ and $g_t$ be as in \equ{ua} and \equ{gt},
\ignore{ $$g_t \df \diag(e^{t/\m }, \ldots,
e^{ t/\m }, e^{-t/\n }, \ldots,
e^{-t/\n })\,,%\in G
$$ and for $A\in \eM_{\m \times \n }$ let 
${
u_A \df
\begin{pmatrix}
   I_m & A \\
    0 & I_n
       \end{pmatrix}
}
$. } and let us   denote $$H = \{
u_A : A\in \eM_{\m \times \n }\}\,;$$ this is the expanding horospherical subgroup relative to $g_1$. Dani \cite{dani} proved that $A\in \eM_{\m \times \n }$ is \ba\ if and only if the trajectory $\{g_tu_A\Z^{\m +\n }: t > 0\}$ is bounded. It is not hard to make this equivalence quantitative. 
%Denote
%$$
%K_\vre \df \{\Lambda \in  X: \Lambda \cap B(\mathbf 0, \vre) = \{\0\}\}\,.
%$$
With the `cusp neighborhood' $U_\vre$ defined as in \equ{ue},  one has

\begin{lem}
\label{danicorr}
For any $0 < c < 1$, $A\in \Bad_{\m ,\n }(c)$  if and only if $$\{g_tu_A\Z^{\m +\n }: t > 0\} 
%\subset K_{\vre}
\cap U_\vre = \varnothing
$$ where $\vre = c^{\frac {1}{\m+\n}}$.  
\end{lem}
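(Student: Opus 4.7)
My plan is to carry out the classical Dani correspondence in its quantitative form, exploiting a Minkowski-style balancing argument that relates the sup-norm of a short lattice vector in $g_tu_A\Z^{m+n}$ to the Diophantine quantity $\|\q\|^n\|A\q-\p\|^m$. The key preliminary computation is to apply $g_tu_A$ to the integer column vector $\binom{-\p}{\q}$ and observe
$$g_tu_A\binom{-\p}{\q}=\binom{e^{t/m}(A\q-\p)}{e^{-t/n}\q},$$
whose supremum norm equals $\max\bigl(e^{t/m}\|A\q-\p\|,\,e^{-t/n}\|\q\|\bigr)$. Every nonzero lattice vector arises in this way for some $(\p,\q)\in\Z^m\times\Z^n$, and the case $\q=0$, $\p\neq 0$ contributes only vectors of norm $e^{t/m}\|\p\|\ge 1>\vre$ for $t>0$, hence is irrelevant to entering $U_\vre$.

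For the forward direction, I would assume $A\in\Bad_{m,n}(c)$ and suppose, toward a contradiction, that some $g_tu_A\Z^{m+n}$ with $t>0$ contains a nonzero vector of norm less than $\vre$. By the preceding formula this yields $\q\neq 0$ and $\p\in\Z^m$ satisfying simultaneously $e^{t/m}\|A\q-\p\|<\vre$ and $e^{-t/n}\|\q\|<\vre$. Raising the first to the $m$-th power and the second to the $n$-th and multiplying cancels the $e^{\pm t}$ factors and yields $\|\q\|^n\|A\q-\p\|^m<\vre^{m+n}=c$, contradicting the defining inequality for $\Bad_{m,n}(c)$.

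For the reverse direction, assume the forward orbit misses $U_\vre$; given any $\q\in\Z^n\setminus\{0\}$ and $\p\in\Z^m$, I want to deduce $\|\q\|^n\|A\q-\p\|^m\ge c$. If $A\q=\p$ then the orbit would actually enter $U_\vre$ for all sufficiently large $t$ (the second coordinate decays while the first vanishes), so this case cannot occur under our hypothesis — and if it did, the Diophantine inequality trivially fails, consistent with $A\notin\Bad_{m,n}(c)$. Otherwise set the balancing time
$$t^* \df \frac{mn}{m+n}\log\!\frac{\|\q\|}{\|A\q-\p\|},$$
which is the unique $t$ making the two entries equal in norm. If $t^*\le 0$, then $\|\q\|\le\|A\q-\p\|$, whence $\|\q\|^n\|A\q-\p\|^m\ge\|\q\|^{m+n}\ge 1>c$ (using $c<1$). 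If $t^*>0$, then by assumption the common value $r\df e^{t^*/m}\|A\q-\p\|=e^{-t^*/n}\|\q\|$ must satisfy $r\ge\vre$; and a direct computation gives $r^{m+n}=\|\q\|^n\|A\q-\p\|^m$, so $\|\q\|^n\|A\q-\p\|^m\ge\vre^{m+n}=c$.

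The proof is essentially algebraic once the two pieces of the sup-norm are identified, and there is no serious obstacle — the only subtle point is the role of the hypothesis $c<1$ (equivalently $\vre<1$), which is used to (i) rule out the degenerate $\q=0$ contribution to $U_\vre$ on the positive orbit and (ii) handle the case $t^*\le 0$ in the reverse direction. Everything else is forced by the Minkowski-type optimization built into the choice of $t^*$.
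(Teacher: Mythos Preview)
Your proposal is correct and follows essentially the same route as the paper's proof: both compute $g_tu_A\binom{-\p}{\q}$, dispose of the $\q=\0$ case using $\vre<1$, and then in each direction exploit the multiplicative cancellation $e^{t}\cdot e^{-t}=1$ between the $m$-th and $n$-th powers of the two components. The only cosmetic difference is in the reverse implication: the paper fixes $t$ by the condition $e^{-t/n}\|\q\|=\vre$ (so the second block sits exactly at the threshold, forcing the first block to be at least $\vre$), whereas you pick the balancing time $t^*$ equalizing the two blocks and read off $r^{m+n}=\|\q\|^n\|A\q-\p\|^m$ directly; both choices give the same inequality.
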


\begin{proof}
	First note that $g_t u_A \Z^{\m +\n }$ consists of vectors of the form
	\begin{equation*}
	\begin{split}
	\begin{pmatrix}
   e^{t/\m }I_\m  & 0 \\
    0 & e^{-t/\n }I_\n 
       \end{pmatrix}
       \begin{pmatrix}
   I_\m  & A \\
    0 & I_\n 
       \end{pmatrix}
       \begin{pmatrix}
   \p \\
    -\q
       \end{pmatrix}
       &=
              \begin{pmatrix}
   e^{t/\m }I_\m  & e^{t/\m }A \\
    0 & e^{-t/\n }I_\n 
       \end{pmatrix}
       \begin{pmatrix}
   \p \\
    -\q
       \end{pmatrix} \\
      & =
       \begin{pmatrix}
       e^{t/\m }\p - e^{t/\m }A\q\\
       -e^{-t/\n }\q       
       \end{pmatrix}
       \end{split}
       \end{equation*}
      Suppose 
      $\|A\q - \p\| \ge c^{1/\m}\|\q\|^{-\n /\m }$ for all 
      $\p \in \Z^\m $ and $\q\in \Z^\n  \smallsetminus\{\0\}$, 
      i.e.\ $A \in \Bad_{m,n}(c)$.
      We claim that $\left\| \begin{pmatrix}
       e^{t/\m }\p - e^{t/\m }A\q\\
       -e^{-t/\n }\q       
       \end{pmatrix}\right\| \ge \vre$ for all $t > 0$ and all nonzero $(\p, \q) \in \Z^m \times \Z^n$.
       Indeed, if $\q = \0$ and $\p \neq \0$,
       the norm of this vector is equal to $$e^{t/m}\|\p\| \ge e^{t/m} \ge 1 \ge \vre\,.$$
       Now take $\q \neq \0$ and suppose 
       $\| -e^{-t/\n }\q  \|  < \vre$, so that $\|\q\|^{-\n /\m } \ge e^{-t/\m }c^{\frac{-\n }{\m(\m +\n) }}$.
       Then 
       $$\|e^{t/\m }\p - e^{t/\m }A\q\| = e^{t/\m }\|\p - A\q\| \ge e^{t/\m } c^{1/\m} \|\q\|^{-\n /\m } 
       		\ge c^{\frac{1}{\m}}\cdot c^{\frac{-\n }{\m(\m +\n)}} = \vre.$$
	
	Conversely, if $\left\| \begin{pmatrix}
       e^{t/\m }\p - e^{t/\m }A\q\\
       -e^{-t/\n }\q       
       \end{pmatrix}\right\| \ge \vre$ for all $\p \in \Z^{\m }$ and $\q\in \Z^{\n }\smallsetminus\{\0\}$,
       fix such $\p$ and $\q$ 
       and let $t$ be such that $e^{-t/\n }\|\q\| = \vre = c^{\frac{1}{\m +\n }}$,
       so $\|\q\|^\n = e^t c^{\frac{\n}{\m+\n}}$.
        Then we must have $e^{t/\m}\|\p -A\q\| \ge \vre$
        and hence 
        $$\|\q\|^n \|A\q-\p\|^m \ge (e^tc^{\frac{\n}{\m+\n}})(e^{-t}\vre^m)
				= c^{\frac{\n}{\m+\n} + \frac{\m}{\m+\n}} = c.$$     
       Since $\p$ and $\q$ were arbitrary,
       $A \in \Bad_{\m ,\n }(c)$.
       \end{proof} 

Our strategy for proving the theorem will be to construct a covering of $\Bad_{\m ,\n }(c)$ 
by small boxes, thereby bounding from above the box dimension of the set.
 We are going to fix (small, depending on $c$) positive $r$  and restrict our attention to a ball $B = B(\mathbf 0, r/2)$  in  $\eM_{\m \times \n }$. Also fix (large) $t > 0$, (small) $\delta> 0$ and a lattice $\Lambda\in X$, and consider
$$
\mathcal{A}(B,t,\delta,\Lambda) \df \{A\in B : g_tu_A\Lambda \in U_\delta\}\,.
$$
To estimate the measure of this set (from both sides), one can use exponential decay of matrix coefficients. Specifically, we need the following, which appears, in slightly different form,
as Proposition 2.4.8 in \cite{KM1}:

\begin{prop}
\label{expdecay} 
There exist $E, \lambda, k, \ell > 0$ such that  for any $f\in \cic(H)$, for any $\psi\in
\cic(X)$ such that the map $g\mapsto g\Lambda$ is injective on some ball in $G$ containing $\supp\,f$,
 and for any $t\ge 0$ one has 
$$
\left|\int_{H}
f(A)\psi(g_thx)\,d\nu(h) - \int_{H}
f\,d\nu \int_X\psi\,d\mu\right|\le \const(f,\psi) e^{-\lambda
t}\,,
$$
where $$\const(f,\psi) = E \|f\|_\ell \|\psi\|_\ell \left( \max_{x\in X} \|\nabla \psi(x)\| \cdot \int_{H}
|f|\,d\nu\right)^k \,.$$
\end{prop}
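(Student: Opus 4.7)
The plan is to reduce the claim to the quantitative exponential mixing of the $g_t$-action on $X$, which is a Sobolev version of Howe--Moore vanishing of matrix coefficients: there exist $\lambda_0, \ell_0 > 0$ and $C_0 > 0$, depending only on $G$, such that
\[
\left|\int_X F_1(x)F_2(g_tx)\,d\mu(x) - \int_X F_1\,d\mu \int_X F_2\,d\mu\right| \le C_0\|F_1\|_{\ell_0}\|F_2\|_{\ell_0}e^{-\lambda_0 t}
\]
for all $F_1, F_2 \in \cic(X)$. This estimate follows from the spectral gap for the $G$-action on $L^2_0(X)$ combined with Sobolev inequalities on $G$, and its proof makes no reference to the particular integrand appearing in Proposition \ref{expdecay}; this is the primary analytic input.

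The first task is to replace $\int_H f(A)\psi(g_t u_A\Lambda)\,d\nu$ by a mixing integral on $X$. I would choose a complementary Lie subgroup $P\subset G$ (the product of the contracting horospherical subgroup $H^-$ of $g_1$ with the centralizer of $\{g_t\}$) so that multiplication gives a local diffeomorphism $H\times P\to G$ at the identity. Next, I would pick a bump function $\theta\in \cic(P)$ with $\int \theta = 1$ supported in a neighborhood $V$ of $e$ of diameter $\delta > 0$, to be optimized later. After shrinking $V$ if necessary so that the injectivity hypothesis still holds on the $V$-thickening of $\supp f$, the formula $F(hp\Lambda)\df f(h)\theta(p)$ defines $F\in \cic(X)$ with $\int_X F\,d\mu = \int_H f\,d\nu$ and Sobolev norm $\|F\|_{\ell_0}\lesssim \|f\|_{\ell_0}\delta^{-a}$ for some $a = a(G,\ell_0)$.

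I would then compare $\int_X F(x)\psi(g_tx)\,d\mu$ with $\int_H f(h)\psi(g_th\Lambda)\,d\nu$ pointwise. For $h\in \supp f$ and $p\in V$, the conjugate $(g_th p)(g_th)^{-1} = g_t h p h^{-1}g_t^{-1}$ contracts to $e$ at rate $e^{-\lambda_1 t}$ on the $H^-$ component (with $\lambda_1 = (m+n)/mn$), and the centralizer component requires a slightly more delicate argument, handled in \cite{KM1} via an additional averaging. This produces $\dist(g_thp\Lambda, g_th\Lambda)\lesssim e^{-\lambda_1 t}\delta$, hence
\[
|\psi(g_thp\Lambda) - \psi(g_th\Lambda)| \le C_1\max_X\|\nabla\psi\|\cdot e^{-\lambda_1 t}\delta.
\]
Integrating against $|f|\theta$ and combining with mixing applied to $(F,\psi)$ yields the two-term bound
\[
\left|\int_H f(A)\psi(g_tu_A\Lambda)\,d\nu - \int_H f\,d\nu\int_X\psi\,d\mu\right| \le C_0\|F\|_{\ell_0}\|\psi\|_{\ell_0}e^{-\lambda_0 t} + C_1\delta\cdot\max\|\nabla\psi\|\int|f|\,d\nu\cdot e^{-\lambda_1 t}.
\]

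The main obstacle will be assembling this additive split into a single bound of the advertised multiplicative form, with $(\max\|\nabla\psi\|\cdot\int|f|\,d\nu)^k$ entering as a $k$-th power. The trick is to leave $\delta$ free: the first term behaves as $\delta^{-a}e^{-\lambda_0 t}$ and the second as $\delta\cdot e^{-\lambda_1 t}$, so optimizing $\delta = \delta(t,\psi,f)$ to equalize the two contributions extracts the product structure and yields the claimed estimate with some $\lambda \in (0,\min(\lambda_0,\lambda_1))$ and exponents $k,\ell$ depending only on $G$. The technical bookkeeping --- including the handling of the centralizer factor and the dependence of the implicit constants on $\Lambda$ only through the injectivity radius hypothesized in the statement --- is carried out in detail in the proof of \cite[Proposition 2.4.8]{KM1}.
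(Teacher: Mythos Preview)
The paper does not give its own proof of this proposition; it is quoted from \cite[Proposition~2.4.8]{KM1}, with the remark that the explicit form of $\const(f,\psi)$ can be read off from that proof under the stated injectivity hypothesis. Your sketch is precisely the \cite{KM1} argument --- thicken $f$ to a function on $X$ via a bump on the complementary subgroup, apply Sobolev exponential mixing, control the discrepancy by $\max\|\nabla\psi\|\int|f|$, and optimize the thickness $\delta$ --- so you are aligned with the paper.

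Two small corrections to the details, neither of which disturbs the strategy. First, the product should be taken in the order $P\cdot H$ rather than $H\cdot P$: with $F(ph\Lambda)=\theta(p)f(h)$ one gets $g_t p h = (g_t p g_{-t})\,g_t h$ with $g_t p g_{-t}\in P$, whereas with your ordering the conjugate $g_t h p h^{-1} g_{-t}$ can acquire an $H$-component that $g_t$ expands. Second, the centralizer part of $g_t p g_{-t}$ does not contract, so the comparison term is $C_1\delta\cdot\max\|\nabla\psi\|\int|f|$ \emph{without} the factor $e^{-\lambda_1 t}$; all of the exponential decay comes from the mixing term after choosing $\delta\asymp e^{-\lambda_0 t/(a+1)}$, which is exactly how the power $k$ and the rate $\lambda$ arise in \cite{KM1}.
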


Here $\nu$ is Haar  measure on $H$, corresponding to Lebesgue measure on $\eM_{m\times n}$,  $\mu$ is probability Haar measure on $X$, and $\|\cdot\|_\ell$ are Sobolev norms.
Note that the statement in \cite{KM1} is somewhat different, as the constant
is not stated explicitly, but the proof produces exactly this constant in the case
that $g \mapsto g\Lambda$ is injective on some ball in $G$ containing $\supp\,f$, which
we assume here.
\ignore{
This is not quite how the proposition is stated there, but look at the last line of its proof. So perhaps, if we decide to do it, we'll need to \textcolor{red}{repeat the proof} using   \cite{KM1}[Theorem 2.4.5] as the source of information. 
}

We use Proposition \ref{expdecay} to deduce the following measure estimate.

\ignore{
\begin{prop}
\label{expdecaysets} 
There exist constants $0 < D < 1$ and $E', a > 0$ such that  for  $B$   as above, for small enough $\delta > 0$, for $\Lambda$ such that  
\eq{inj}{\text{the map $h\mapsto h\Lambda$ is injective on some ball in $G$ containing\footnote{Here we identify $B$ with its image under the map $Y\mapsto u_Y$.} }B\,,} and for any $t\ge 0$ one has 
$$
\left|\nu\big(\mathcal{A}(B,t,\delta,\Lambda)\big) - Dr^{mn}\delta^{m+n}\right|\le E' r^{-a} e^{-\lambda
t}\,.
$$
\end{prop}
}

\ignore{
\begin{prop}
\label{expdecaysets} 
There exist constants $D, E', a, \lambda' > 0$ such that  for  $B$   as above, for small enough $\delta > 0$, for $\Lambda$ such that  
\eq{inj}{\text{the map $h\mapsto h\Lambda$ is injective on some ball in $G$ containing\footnote{Here we identify $B$ with its image under the map $Y\mapsto u_Y$.} }B\,,} and for any $t\ge 0$ one has 
$$
\left|\nu\big(\mathcal{A}(B,t,\delta,\Lambda)\big) - Dr^{mn}\delta^{m+n}\right|\le E' r^{-a}e^{-\lambda't}\,.
$$
\end{prop}
}

\begin{prop}
\label{expdecaysets} 
There exist constants $0 < D < 1$, $\lambda' > 0$ and $E' > 1$ such that  for  $B$   as above, for $0 < r < 1/2$, for small enough $\delta > 0$, for $\Lambda$ such that  
\eq{inj}{\text{the map $h\mapsto h\Lambda$ is injective on some ball in $G$ 
containing\footnotemark\ } B\,,} and for any $t\ge 0$ one has 
\eq{conclusion}{
\footnotetext{Here we identify $B$ with its image under the map $Y\mapsto u_Y$.} 
%\begin{split}
\nu\big(\mathcal{A}(B,t,\delta,\Lambda)\big) %&
\ge  Dr^{\m \n }\delta^{\m +\n } -  E' e^{-\lambda' t}\,. 
%\\ %&= Db^{mn}\delta^{(mn+1)(m+n)} -  E' b^{-a} \delta^{-a(m+n)}e^{-\lambda t}\,.
%\end{split}\
}
\end{prop}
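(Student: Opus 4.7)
The plan is to apply Proposition~\ref{expdecay} to smooth nonnegative test functions $f$ on $H$ and $\psi$ on $X$, so chosen that the product $f(A)\psi(g_t u_A\Lambda)$ is pointwise bounded by $\mathbf{1}_{\mathcal{A}(B,t,\delta,\Lambda)}$ while the product of integrals $\int f\,d\nu\cdot\int\psi\,d\mu$ already captures the main term. I would take $f$ supported in $B$, with $0\le f\le 1$ and $\int_H f\,d\nu\ge c_1 r^{\m\n}$, by rescaling a fixed reference bump on $\mr$; and, using the known asymptotic $\mu(U_\vre)=(\mathrm{const}+o(1))\vre^{\m+\n}$ as $\vre\to 0$ recalled in the introduction, I would take $\psi$ supported in $U_\delta$, with $0\le\psi\le 1$ and $\int_X\psi\,d\mu\ge c_2\delta^{\m+\n}$, provided $\delta$ is small enough. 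Since $f(A)\psi(g_t u_A\Lambda)$ takes values in $[0,1]$ and vanishes off $\mathcal{A}(B,t,\delta,\Lambda)$, integration yields
$$\nu\big(\mathcal{A}(B,t,\delta,\Lambda)\big)\ge\int_H f(A)\psi(g_t u_A\Lambda)\,d\nu(A).$$

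Next I would invoke Proposition~\ref{expdecay}; the injectivity hypothesis \equ{inj} is exactly what is needed. This produces
\begin{equation*}
\nu\big(\mathcal{A}(B,t,\delta,\Lambda)\big)\ge Dr^{\m\n}\delta^{\m+\n}-C(r,\delta)\,e^{-\lambda t},
\end{equation*}
with $D:=c_1 c_2$ and $C(r,\delta)$ built from the Sobolev norms $\|f\|_\ell$, $\|\psi\|_\ell$, from $\int|f|\,d\nu$, and from $\max\|\nabla\psi\|$ as in the statement of Proposition~\ref{expdecay}. By rescaling of reference bumps, each of these quantities is dominated by a fixed power of $r$ or $\delta$ with exponents depending only on $\m,\n,k,\ell$, so $C(r,\delta)\le\mathrm{const}\cdot r^{-\alpha}\delta^{-\beta}$ for some dimensional constants $\alpha,\beta\ge 0$.

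The final step is to absorb $C(r,\delta)e^{-\lambda t}$ into a cleaner bound $E'e^{-\lambda' t}$ with $E',\lambda'$ independent of $r,\delta$. I would pick $\lambda'\in(0,\lambda)$ small enough and $E'\ge 1$ large enough, in terms of $\m,\n,\alpha,\beta$ only, so that whenever $t$ is large enough to make $E'e^{-\lambda' t}<Dr^{\m\n}\delta^{\m+\n}$ one automatically has $C(r,\delta)e^{-\lambda t}\le E'e^{-\lambda' t}$. In that regime the mixing bound yields the conclusion, while for smaller $t$ the right-hand side of \equ{conclusion} is nonpositive and the claim is trivial. Comparing the two thresholds reduces to a log-linear inequality in $\log(1/r)$ and $\log(1/\delta)$, satisfied once $\lambda'/(\lambda-\lambda')\le\min(\m\n/\alpha,(\m+\n)/\beta)$ and $E'$ exceeds a dimensional multiple of $1/D$.

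The main technical work is the Sobolev-norm bookkeeping for $\psi$, which lives on $X$ near the cusp: one needs local coordinates adapted to the geometry of $U_\delta$ (the relevant length scale being $\delta$) in order to track all $\delta$-dependencies in $\|\psi\|_\ell$ and $\max\|\nabla\psi\|$. Modulo this routine if careful computation, the proof reduces to one application of exponential mixing combined with a polynomial-versus-exponential comparison.
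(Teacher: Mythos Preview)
Your proposal is correct and follows the same high-level strategy as the paper---apply Proposition~\ref{expdecay} to smooth minorants of $1_B$ and $1_{U_\delta}$---but the execution differs in two places worth noting.

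First, for the test function on $H$ the paper does \emph{not} fix a rescaled bump inside $B$. Instead it takes an \emph{outer} approximation $f_\vre\ge 1_B$ whose smoothing scale $\vre=e^{-\lambda' t}$ is made to depend on $t$; the Sobolev norm $\|f_\vre\|_\ell\lesssim\vre^{-(\m\n+\ell+1)}$ then grows with $t$ but is absorbed by $e^{-\lambda t}$ once $\lambda'$ is chosen with $\lambda-(\m\n+\ell+1)\lambda'\ge\lambda'$, and the over-approximation error $\int|f_\vre-1_B|\,d\nu\lesssim\vre$ is already of size $e^{-\lambda' t}$. This produces the error term $E'e^{-\lambda' t}$ directly, without your case split. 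Your inner approximation plus the threshold/dichotomy argument is a perfectly valid alternative that trades the $t$-dependent test function for a comparison of two log-linear thresholds; it is arguably cleaner conceptually, and your verification that $\lambda'/(\lambda-\lambda')\le\min(\m\n/\alpha,(\m+\n)/\beta)$ suffices is correct.

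Second, for $\psi$ the paper (Lemma~\ref{Sobapprox3}) exploits a geometric fact you may be underestimating: there is a \emph{fixed} $\vre_0>0$, independent of $\delta$, such that the inner $\vre_0$-neighborhood of $U_\delta$ already contains $U_{\delta/2}$. Hence one can mollify $1_{U_\delta}$ at a $\delta$-independent scale and obtain $\|\nabla\psi\|\le C$ and even $\|\psi\|_\ell\le C_\ell\int\psi\,d\mu$, i.e.\ $\beta=0$ in your notation. Your remark that ``the relevant length scale is $\delta$'' is therefore overly pessimistic---but since your threshold argument tolerates any polynomial $\delta$-loss, this does not affect correctness.
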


To prove this, we will apply Proposition \ref{expdecay} to smooth approximations 
of $1_B$ and $1_{ U_\delta}$. In order to extract useful information
from the conclusion of the proposition though we will need to bound the Sobolev norms
of these approximations, so we first prove the following lemmas.

\ignore{
\begin{lem}
\label{Sobapprox1}
For each $r > 0$, there exist smooth functions $F_\vre : \R \to \R$ ($\vre > 0$) such that
	\begin{enumerate}
		\item $F_\vre \ge 0$
		\item $F_\vre |_{(-\infty,r]} = 1$
		\item $F_\vre |_{[r+\vre,\infty)} = 0$
		\item $\|F_\vre\|_\ell \in O(\vre^{-(\ell+1)})$ for each $\ell \in \N$.
	\end{enumerate}
\end{lem}

\begin{proof}
Let 
$$g(x) = \begin{cases}
e^{\frac{-1}{(1-x)^2}} & |x| < 1\\
0                                & |x| \ge 1,
\end{cases}$$
let $h = 1_{(-\infty,r+\frac12\vre]}$, and let $g_\vre = \frac{1}{2\vre}g\left(\frac{x}{2\vre}\right)$.
Then $g_\vre$ is smooth, 
so the convolution $F_\vre = g_\vre \ast h$ is smooth and indeed $F_\vre^{(n)} = g_\vre^{(n)}\ast h$.
It is clear that $F_\vre$ satisfies properties properties (1)--(3).
Let $c = \displaystyle\max_{0 \le i \le \ell} \max_{x\in \R} |g^{(i)}(x)|$.
Note that $g_\vre^{(n)}(x) = (2\vre)^{-n-1} g^{(n)}(x)$, so for each $\vre > 0$
$\displaystyle\max_{0 \le i \le \ell} \max_{x\in \R} |g_\vre^{(i)}(x)| \le c\vre^{-(\ell+1)}$.
Furthermore, $g_\vre^{(i)}(x) = 0$ for all $x \not\in [r,r+\vre]$ and all $i \ge 1$.
Property (4) follows.
\end{proof}
}

\begin{lem}
\label{Sobapprox2}
For any $\ell, k \in \N$ there exists $M \in \R$ such that 
for any $0 < r \le 1/2$, 
there exist $C^\infty$ functions $f_\vre : \R^k \to \R$ ($0 < \vre \le r$) such that 
	\begin{enumerate}
		\item $0 \le f_\vre \le 1$
		\item $f_\vre |_{B({\mathbf 0},r)} = 1$
		\item $f_\vre |_{B({\mathbf 0},r+\vre)^c} = 0$
		\item $\|f_\vre\|_\ell \le M \vre^{-(k + \ell +1)}$.
	\end{enumerate}
\end{lem}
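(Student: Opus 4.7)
The plan is to construct $f_\vre$ by the standard mollification trick: convolve the characteristic function of an intermediate ball with a smooth bump of scale $\vre/2$. Concretely, fix once and for all a nonnegative $\phi \in C_c^\infty(\R^k)$ supported in $B(\mathbf{0}, 1)$ with $\int \phi = 1$ (all balls in the supremum norm, as in \S\ref{dmitry}), and for $s > 0$ set $\phi_s(x) = s^{-k}\phi(x/s)$, so that $\supp \phi_s \subset B(\mathbf{0}, s)$ and $\int \phi_s = 1$. Then define
$$f_\vre \df \mathbf{1}_{B(\mathbf{0},\, r + \vre/2)} * \phi_{\vre/2}.$$

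Properties (1)--(3) follow immediately from basic facts about convolution. For any $x$ with $\|x\| \le r$, the translated support of $\phi_{\vre/2}$ sits entirely inside $B(\mathbf{0}, r+\vre/2)$, so the convolution equals $\int \phi_{\vre/2} = 1$; for $\|x\| > r + \vre$, that translated support is disjoint from $B(\mathbf{0}, r+\vre/2)$, so $f_\vre(x) = 0$. Smoothness and $0 \le f_\vre \le 1$ are standard consequences of convolving with a smooth, nonnegative kernel of total mass one against an $L^\infty$ function of norm one.

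The main computation is (4). Since differentiation passes onto the smooth factor, one has $\partial^\alpha f_\vre = \mathbf{1}_{B(\mathbf{0},\, r+\vre/2)} * \partial^\alpha \phi_{\vre/2}$. The scaling identity $\|\partial^\alpha \phi_{\vre/2}\|_{L^1} = (\vre/2)^{-|\alpha|}\|\partial^\alpha \phi\|_{L^1}$ combined with Young's inequality then gives $\|\partial^\alpha f_\vre\|_{L^\infty} \le C_\alpha \vre^{-|\alpha|}$. Because $\supp f_\vre \subset B(\mathbf{0}, r+\vre) \subset B(\mathbf{0}, 1)$ under the hypotheses $r \le 1/2$ and $\vre \le r$, squaring this pointwise bound and integrating over a set of uniformly bounded volume yields $\|\partial^\alpha f_\vre\|_{L^2} \le C_\alpha' \vre^{-|\alpha|}$. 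Summing over $|\alpha| \le \ell$ produces $\|f_\vre\|_\ell \le M \vre^{-\ell}$, which is already stronger than the stated bound $M\vre^{-(k+\ell+1)}$, so the lemma holds for any conventional choice of Sobolev norm with polynomial loss in the derivative order.

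I do not anticipate any genuine obstacle here; this is a routine mollification argument. The only things to monitor are that constants may be chosen uniformly in $r \in (0,1/2]$ (they can, since the only geometric input is the volume bound on $B(\mathbf{0}, r+\vre)$, which is uniformly $\le 1$) and that distances in $\R^k$ are interpreted in the supremum norm throughout, so that the cubical support geometry aligns with the balls $B$ and $U_\vre$ appearing in the ambient argument.
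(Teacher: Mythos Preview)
Your argument is correct and follows the same mollification construction as the paper (convolve $\mathbf{1}_{B(\mathbf 0,\,r+\vre/2)}$ with a rescaled bump of width $\vre/2$). The only difference is in the bookkeeping for (4): the paper bounds $\|D_\alpha g_\vre\|_{L^2}$ via its $L^\infty$ norm and then applies Young's inequality in the form $L^2 * L^1 \to L^2$, picking up an extra factor $\vre^{-k-1}$, whereas you use the exact $L^1$ scaling $\|\partial^\alpha \phi_{\vre/2}\|_{L^1}=(\vre/2)^{-|\alpha|}\|\partial^\alpha\phi\|_{L^1}$ together with $L^\infty * L^1 \to L^\infty$ and the uniform volume bound on the support, yielding the sharper estimate $\|f_\vre\|_\ell \le M\vre^{-\ell}$. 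Either bound suffices for the application.
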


\begin{proof}
Let $g : \R^k \to [0,\infty)$ be a smooth function with $\supp g \subset B(\0,1)$ and $\|g\|_{L^1} = 1$,
and let $g_\vre(\x) = (2/\vre)^k g(2\x/\vre)$.
Then the convolution
$f_\vre = g_\vre \ast 1_{B(\0, r + \vre/2)}$
is smooth and indeed, for any multi-index $\alpha = (\alpha_1, \dots, \alpha_k)$,
 $$\dfrac{\partial^{|\alpha|} f_\vre}{\partial x_1^{\alpha_1}\dots \partial x_k^{\alpha_k} }
 = \dfrac{\partial^{|\alpha|} g_\vre}{\partial x_1^{\alpha_1}\dots \partial x_k^{\alpha_k}} \ast h.$$
 We will write $D_\alpha$ for 
 $\dfrac{\partial^{|\alpha|}}{\partial x_1^{\alpha_1}\dots \partial x_k^{\alpha_k}}$.
It is easy to see that properties (1)--(3) are satisfied.
Now set
$$c =  \displaystyle\max_{0 \le |\alpha| \le \ell} \max_{\x\in \R^k} \left|D_\alpha g(\x)\right|.$$
Then 
$$| D_\alpha g_\vre(\x)|
	= |(\vre/2)^{-k-|\alpha|-1}D_\alpha g(2\x/\vre)| \le 2^{k+\ell +1}c\vre^{-(k+\ell+1)}.$$
Thus, since $D_\alpha g_\vre$ is supported on $B(\0, \vre/2) \subset B(\0, 1/2)$,
it follows that $$\|D_\alpha g_\vre\|_{L^2} \le 2^{k+\ell +1}c\vre^{-(k+\ell+1)}\,.$$
Hence, applying Young's Inequality, we have that
$$\|D_\alpha f_\vre\|_{L^2} = \| D_\alpha g_\vre \ast 1_{B(\0,r+\vre/2)}\|_{L^2} 
		\le  \|D_\alpha g_\vre\|_{L^2} \|1_{B(\0,r+\vre/2)}\|_{L^1}
		\le 2^{k+\ell +1}c\vre^{-(k+\ell+1)},$$
from which property (4) follows.
\end{proof}

To approximate $1_{ U_\delta}$, we will use the following lemma.

\begin{lem}
\label{Sobapprox3}
There exists constants $C, D_1, D_2 > 0$ depending only on $m,n$ such that for every small enough 
$\delta > 0$ one can find
$\psi : X \to [0,\infty)$ with the following properties:
\begin{enumerate}
	\item $\psi \le 1_{ U_\delta}$; 
	\item $\psi \in \cic(X)$;
	\item $D_1\delta^{\m +\n } \le \int \psi\, d\mu \le \int 1_{ U_\delta}\, d\mu \le D_2\delta^{\m +\n }$;
	\item $\|\nabla \psi\| \le %\frac{D_1}{D_2}
	C$.
\end{enumerate}
Moreover,   for each $\ell \in \N$ there exists $C_\ell > 0$ such that  for any $\psi$ as above one has   $\|\psi\|_\ell \le C_\ell\int \psi\, d\mu$.
\end{lem}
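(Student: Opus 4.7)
The strategy is to sidestep the fact that $\alpha(\Lambda)\df\min_{\vec v\in\Lambda\ssm\{\0\}}\|\vec v\|$ fails to be smooth where the shortest vector is non-unique, by instead using a Siegel-type transform that smoothly detects short vectors at scale $\delta$, together with a second Siegel transform at a smaller scale $\delta_0$ to enforce compact support. Fix a non-negative $\phi\in\cic(\R^{\m+\n})$ with $\phi\equiv 1$ on $B(\0,1/2)$ and $\supp\phi\subset B(\0,1)$, and define
\[\hat\phi_\vre(\Lambda)\df\sum_{\vec v\in\Lambda\ssm\{\0\}}\phi(\vec v/\vre).\]
This is smooth, non-negative, supported in $\ol{U_\vre}$, and satisfies $\hat\phi_\vre(\Lambda)\ge 2$ on $U_{\vre/2}$ (if $\vec v\in\Lambda$ has $\|\vec v\|\le \vre/2$ then both $\phi(\pm\vec v/\vre)\ge 1$). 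Let $\chi\in C^\infty([0,\infty),[0,1])$ be a cutoff with $\chi(0)=0$ and $\chi\equiv 1$ on $[1,\infty)$, and pick $\delta_0=\delta/K$ with $K$ large enough that $\mu(U_{\delta_0})\le \tfrac12\mu(U_{\delta/2})$, which is possible for small $\delta$ by \cite[Prop.~7.1]{KM2}. Set
\[\psi\df\chi(\hat\phi_\delta)\bigl(1-\chi(\hat\phi_{\delta_0})\bigr).\]

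Then $\psi$ is smooth, takes values in $[0,1]$, and vanishes outside $U_\delta$ (since $\hat\phi_\delta=0$ there), giving (1). On $U_{\delta_0/2}$ we have $\hat\phi_{\delta_0}\ge 2$ so $1-\chi(\hat\phi_{\delta_0})=0$; hence $\supp\psi\subset X\ssm U_{\delta_0/2}$, which is compact, giving (2). For (3), on $U_{\delta/2}\ssm U_{\delta_0}$ we have $\chi(\hat\phi_\delta)=1$ and $\chi(\hat\phi_{\delta_0})=0$, so $\psi\equiv 1$ there, giving $\int\psi\,d\mu\ge\mu(U_{\delta/2})-\mu(U_{\delta_0})\ge D_1\delta^{\m+\n}$; the upper bound $\int 1_{U_\delta}\,d\mu\le D_2\delta^{\m+\n}$ is directly from \cite[Prop.~7.1]{KM2}.

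The core technical step is to prove $\|D^k\hat\phi_\delta\|_\infty\le C_k$ \emph{uniformly in $\delta$} on the transition region $\{0<\hat\phi_\delta<1\}$. On this region, $\hat\phi_\delta<1$ forces $\alpha(\Lambda)>\delta/2$ (otherwise some $\vec v$ with $\|\vec v\|\le\delta/2$ and its negative would contribute $\ge 2$); a standard packing argument then bounds $\#\{\vec v\in\Lambda\ssm\{\0\}:\|\vec v\|\le\delta\}\le 5^{\m+\n}$. For $\xi\in\g$ and a lattice vector $\vec v=g\vec n$ with $\Lambda=g\Z^{\m+\n}$, the infinitesimal action of the right-invariant vector field on $X$ associated to $\xi$ gives
\[\xi\bigl(\phi(\vec v/\delta)\bigr)=\tfrac1\delta D\phi(\vec v/\delta)\cdot\xi\vec v,\]
whose absolute value is $\le\|D\phi\|_\infty\|\xi\|\cdot\|\vec v\|/\delta\le\|D\phi\|_\infty\|\xi\|$ because $\|\vec v\|\le\delta$ on $\supp(\phi(\cdot/\delta))$. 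Higher-order derivatives follow the same pattern: each $\delta^{-1}$ factor from the chain rule is compensated by a factor $\|\vec v\|\le\delta$. Combined with the $O(1)$ term count, this gives $\|D^k\hat\phi_\delta\|_\infty\le C_k$ on the transition region, uniformly in $\delta$.

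By Fa\`a di Bruno, $\|D^k\chi(\hat\phi_\delta)\|_\infty\le C_k'$ (noting $\chi^{(j)}(\hat\phi_\delta)=0$ outside the transition region for $j\ge 1$), and likewise for $\chi(\hat\phi_{\delta_0})$; hence $\|D^k\psi\|_\infty\le C_k''$, giving (4). Since $\supp\psi\subset U_\delta$ has measure $\le D_2\delta^{\m+\n}$, an $L^1$-based Sobolev norm satisfies
\[\|D^k\psi\|_{L^1}\le C_k''\mu(U_\delta)\le\frac{C_k''D_2}{D_1}\int\psi\,d\mu,\]
yielding (5). The main technical obstacle is the uniform-in-$\delta$ bound on derivatives of the Siegel transform; it hinges on the Minkowski-type packing estimate restricting the sum to $O(1)$ terms on the transition region, combined with the cancellation of $\delta^{-k}$ against $\|\vec v\|^k\le\delta^k$ that is made possible by the compact support of $\phi$.
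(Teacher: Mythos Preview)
Your construction via Siegel transforms is a genuinely different route from the paper's and is essentially correct for properties (1)--(4); the packing argument and the $\delta^{-k}$--vs--$\|\vec v\|^k$ cancellation are the right ideas, and they do yield uniform $L^\infty$ bounds on all derivatives of $\psi$. The paper instead observes that there is a \emph{fixed} $\vre>0$ (independent of $\delta$) with $U_{\delta/2}\subset\{\Lambda:\dist(\Lambda,\partial U_\delta)>\vre\}$, chooses a fixed mollifier $g\in\cic(G)$ supported in $B(e,\vre/4)$, and sets $\psi=g*1_A$ for a suitable compact $A\subset U_\delta$. Your approach avoids the metric observation and is arguably more explicit; the paper's approach is shorter once that observation is made.

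There is, however, a gap in your final step for the ``Moreover'' clause. The Sobolev norms $\|\cdot\|_\ell$ in this paper (cf.\ the proof of Lemma~\ref{Sobapprox2} and the use of Young's inequality in the paper's own proof of this lemma) are $L^2$-based. From $\|D^k\psi\|_\infty\le C_k''$ and $\supp\psi\subset U_\delta$ you only get
\[
\|D^k\psi\|_{L^2}\le C_k''\,\mu(U_\delta)^{1/2}\le C_k''\sqrt{D_2/D_1}\Bigl(\int\psi\,d\mu\Bigr)^{1/2},
\]
i.e.\ $\|\psi\|_\ell\le C_\ell\bigl(\int\psi\bigr)^{1/2}$ rather than $\|\psi\|_\ell\le C_\ell\int\psi$. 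Your $L^1$ computation is correct but does not match the norm in the statement. The paper obtains the stronger linear bound precisely because of the convolution structure: Young's inequality gives $\|D_\alpha\psi\|_{L^2}\le\|D_\alpha g\|_{L^2}\cdot\mu(A)$, which is linear in $\mu(A)\asymp\delta^{m+n}$, and this is not available for your $\psi$. Note that the weaker square-root bound would still suffice for the application in Proposition~\ref{expdecaysets} (only boundedness of $\|\psi\|_\ell$ is used there), but it does not establish the lemma as stated.
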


We remark that this is essentially a consequence of
Lemma 4.2 and Proposition 7.1 in \cite{KM2}, together with the inner regularity
of Haar measure on open sets, but we provide a proof for completeness.

\begin{proof} It follows directly from Lemma 4.2 in  \cite{KM2} that there exist $c_1, c_2 > 0$ such that for any $\delta$ one has
$$c_1 \delta^{n+m} \ge \mu( U_\delta) \ge c_1\delta^{n+m} - c_2\delta^{2(n+m)}.$$
Thus for small enough $\delta$ we get
$$c_1 \delta^{n+m} \ge \mu( U_\delta) \ge \frac{c_1}{2}\delta^{n+m}.$$
Choose $\vre > 0$ such that whenever $\dist(\Lambda,\Lambda') < \vre$,
the smallest nonzero vector in $\Lambda$ is least half as large as the smallest nonzero vector in $\Lambda'$ (clearly $\vre$ depends only on the choice of the metric). Also denote
$$A_\delta'(\vre) \df \{\Lambda \in  U_\delta : \dist(\Lambda, \partial  U_\delta) > \vre\}.$$
Then it follows that $U_{\delta/2} \subset A_\delta'(\vre) \subset  U_\delta$
and therefore  for small enough $\delta$ we have
$\mu\big(A_\delta'(\vre)\big) \ge \frac{c_1}{2}(\delta/2)^{n+m}$.
Note that $A_\delta'(\vre)$ is an open set, so 
since Haar measure is inner regular on open sets,
there exists a compact $A_\delta(\vre) \subset A_\delta'(\vre)$
with $$\mu\big(A_\delta(\vre)\big) \ge \frac12 \mu\big(A_\delta'(\vre)\big) \ge 2^{-(n+m+2)}c_1\delta^{n+m}\,.$$
Let $A_\delta(\vre)^+$ and $A_\delta(\vre)^{++}$ be the closed $\vre/4$ and $3\vre/4$ neighborhoods 
of $A_\delta(\vre)$ respectively,
and note that these sets are compact as well.

Now let $g: G \to [0,\infty)$ be a smooth function supported on $B(e,\vre/4)$
with $\|g\|_{L^1} = 1$, and take
$\psi = g \ast 1_{A_\delta(\vre)^+}$.
Then $\psi$ is supported on $A_\delta(\vre)^{++} \subset  U_\delta$, so properties (1) and (2) hold.
Furthermore, $\psi \equiv 1$ on $A_\delta(\vre)$, so
$$\int 1_{ U_\delta} \, d\mu \ge \int \psi\, d\mu \ge \mu\big(A_\delta(\vre)\big) 
\ge 2^{-(n+m+2)}c_1\delta^{n+m}.$$
Choosing $D_2 = c_1$ and $D_1 = 2^{-(n+m+2)}c_1$, we obtain property (3).

Let  $C \df \displaystyle \max_{h\in B(\0,\vre/4)} \ \|\nabla g(h)\|$.
Then, since $\mu(A_\delta(\vre)^+) \le 1$ for small enough $\delta > 0$, for every differential operator $D$ on $G$ 
we have \eq{diffineq}{|D\psi(\Lambda)| \le |(D g \ast 1_{A_\delta(\vre)^+})(\Lambda)|\,,} which implies (4).

Finally for $\ell\in\N$ let  $$C_\ell \df \displaystyle\frac{D_2}{D_1}\max_{0 \le |\alpha| \le \ell}\ \max_{h\in B(\0,\vre/4)} \ |D_\alpha g(h)|\,.$$
Using \equ{diffineq},  for small enough $\delta > 0$
we have $$\|D_\alpha \psi(\Lambda)\| \le |(D_\alpha g \ast 1_{A_\delta(\vre)^+})(\Lambda)|
		\le \frac{D_1}{D_2}C_\ell\,,$$
and Young's Inequality implies
$$\|D_\alpha \psi\|_{L^2} \le \|D_\alpha g\|_{L^2} \|1_{A_\delta(\vre)^+}\|_{L^1}.$$
Since, for small enough $\delta$, $D_\alpha \psi$ is supported on a set of measure less than $1$,
we get
$$\|\psi\|_\ell \le \frac{D_1}{D_2} C_\ell D_2 \delta^{m+n} \le C_\ell D_1 \delta^{n+m} \le C_\ell \|\psi\|_{L^1}.$$
\end{proof}

\begin{proof}[Proof of Proposition \ref{expdecaysets}] Recall that we are given $B = B({\bf 0},r/2)$, a small  $\delta > 0$,  $\Lambda$ satisfying \equ{inj} and $t\ge 0$.  
Take $\lambda$ and $\ell$ as in  Proposition \ref{expdecay},  let $\lambda' > 0$ be small enough that $$\lambda - (nm+\ell+1)\lambda' \ge \lambda'\,,$$
and let $\psi$ be as in Lemma \ref{Sobapprox3}.
Let $f = 1_B$ and define  $\vre\df {e^{-\lambda't}}$. We can assume that $\vre \le r$ since otherwise the right hand side of \equ{conclusion} is negative and the conclusion of the proposition follows. 

Take $f_\vre$  as in Lemma \ref{Sobapprox2}.
Then, by Proposition \ref{expdecay}, we have
$$\left|\int_Hf_\vre(A)\psi(g_thx)\, d\nu(h) - \int_H f_\vre\, d\nu \int_X \psi \, d\mu \right| 
			\le E\|f_\vre\|_{\ell}\| \psi\|_{\ell}\left(\max_{x\in X} \|\nabla \psi(x)\|   \int_H |f_\vre|\, d\nu \right)^ke^{-\lambda t},$$
so, letting $$E_1 = EM C_\ell C D_2C^k\big(\nu (B(\0,1))\big)^k$$ (which is
independent of $\delta$, $r$, and $t$), we have for $\delta < 1, r < 1/2$ and $t \ge 0$
$$\int_Hf_\vre(A)\psi(g_thx)\, d\nu(h) 
	\ge \int_H f_\vre\, d\nu \int_X \psi \, d\mu - E_1e^{(nm+\ell+1)\lambda't -\lambda t}
	\ge \int_H f_\vre\, d\nu \int_X \psi \, d\mu - E_1e^{-\lambda' t}.$$
Hence,
\begin{eqnarray*}
	\nu\big(\mathcal{A}(B,t,\delta,\Lambda)\big) &=& \int_H f(A)1_{ U_\delta}(g_thx)\, d\nu(h)\\
		&\ge& \int_H f(A)\psi(g_thx)\, d\nu(h) \\
		&\ge& \int_H f_\vre(A)\psi(g_thx)\, d\nu(h) - \int |f_\vre - f|\, d\nu\\
		&\ge& \int_H f_\vre(A)\psi(g_thx)\, d\nu(h) - 
				\nu\big(B({\mathbf 0}, r + e^{-\lambda' t}) \smallsetminus B({\mathbf 0}, r)\big)\\
		&\ge& \int_H f_\vre(A)\psi(g_thx)\, d\nu(h) - 
				nm2^{nm}e^{-\lambda' t}\\
		&\ge& \int_H f_\vre\, d\nu \int_X \psi\, d\mu 
				- E_1e^{-\lambda' t} - \m\n 2^{\n \m }e^{-\lambda' t}\\
		&\ge& D_1r^{\m\n }\delta^{\m +\n } - (E_1 + \m\n 2^{\m\n })e^{-\lambda' t}
\end{eqnarray*}
Taking $E' = E_1 + \m\n 2^{\m\n }$ and $D = D_1$ completes the proof.
\end{proof}
\ignore{Explanation: $\nu(B) = r^{mn}$, and $D\delta^{m+n}$ is pretty close to the measure of 
%the complement of 
$U_\delta$, see \cite[\S 7]{KM2}. The exponent $a$ needs to be computed and will depend on $\ell$ and $k$ from the previous proposition. See also \cite{KM1}[Lemma 2.4.7].}

We will need to apply Proposition \ref{expdecaysets} to an arbitrary $\Lambda$ from the complement of $U_\delta$. This places a restriction on $r$, since we need to satisfy \equ{inj}. 
The following lemma gives us a concrete bound on how small $r$ must be in order to
meet this requirement.

\begin{lem}
\label{injrad} There exists $b > 0$ such that the injectivity radius of $X\ssm U_\delta$ is not less than $b \cdot \delta^{\m +\n }$. 
\end{lem}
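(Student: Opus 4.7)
Write $\Lambda = g_0\Z^{m+n}$ for some $g_0\in G$. With the fixed right-invariant Riemannian metric, the injectivity radius at $\Lambda$ is (up to a factor of $2$) equal to
$$\tfrac12\inf\{\dist(e,h) : h\in g_0\Gamma g_0^{-1}\smallsetminus\{e\}\}\,,$$
and the elements $h=g_0\gamma g_0^{-1}$ ($\gamma\in\Gamma$) are precisely the elements of $G$ satisfying $h\Lambda=\Lambda$. So it suffices to produce $b>0$, independent of $\Lambda\in X\smallsetminus U_\delta$ and of (small) $\delta$, such that any $h\in G\smallsetminus\{e\}$ with $h\Lambda=\Lambda$ satisfies $\dist(e,h)\ge b\delta^{m+n}$.

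The key geometric input is that a lattice avoiding $U_\delta$ admits a not-too-long basis. Namely, the assumption $\Lambda\notin U_\delta$ means the first successive minimum satisfies $\lambda_1(\Lambda)\ge\delta$, and Minkowski's second theorem gives $\lambda_1(\Lambda)\cdots\lambda_{m+n}(\Lambda)\le C_0$, where $C_0$ depends only on $m+n$ and the supremum norm. Hence
$$\lambda_{m+n}(\Lambda)\le C_0\,\lambda_1(\Lambda)^{-(m+n-1)}\le C_0\,\delta^{-(m+n-1)}\,.$$
By classical reduction theory (Mahler), $\Lambda$ then has a $\Z$-basis $v_1,\dots,v_{m+n}$ with $\|v_i\|\le C_1\lambda_{m+n}(\Lambda)\le C_2\delta^{-(m+n-1)}$, with $C_1,C_2$ depending only on $m+n$.

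Now fix $h\in G$ with $h\Lambda=\Lambda$ and suppose $\|h-I\|_{op}<\eta$. Then for each $i$, the vector $hv_i-v_i$ belongs to $\Lambda$ and has norm at most $\eta\|v_i\|\le \eta C_2\delta^{-(m+n-1)}$. If we choose $\eta<\delta^{m+n}/C_2$, this bound is strictly less than $\delta$, so $hv_i-v_i$ is a vector of $\Lambda$ of length less than the first successive minimum — hence it is zero. Thus $h$ fixes a basis of $\R^{m+n}$, i.e. $h=I$. We conclude that every nontrivial $h\in G$ preserving $\Lambda$ satisfies $\|h-I\|_{op}\ge \delta^{m+n}/C_2$.

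Finally, the Riemannian distance on $G$ and the operator norm $\|\cdot - I\|_{op}$ are bi-Lipschitz equivalent in a fixed neighborhood of $I$, so there is a constant $C_3>0$ such that $\|h-I\|_{op}\ge\delta^{m+n}/C_2$ implies $\dist(e,h)\ge C_3\,\delta^{m+n}/C_2$ (for $\delta$ small enough; for larger $\delta$ where $X\smallsetminus U_\delta$ is nonempty the lemma is handled by shrinking $b$). Setting $b=C_3/(2C_2)$ gives the desired bound. There is no real obstacle here; the argument is the standard Margulis-type comparison between injectivity radius and first successive minimum, and the only things to keep track of are that all constants depend only on $m+n$ and the norm.
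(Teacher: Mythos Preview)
Your proposal is correct and follows essentially the same approach as the paper. Both arguments (i) use reduction theory to find a basis of $\Lambda$ with $\|v_i\|\le C\delta^{-(m+n-1)}$ whenever $\Lambda\notin U_\delta$, and then (ii) observe that an $h$ with $\|h-I\|_{op}$ smaller than a constant times $\delta^{m+n}$ moves each $v_i$ by less than $\delta$, forcing $h=I$; the only cosmetic differences are that the paper phrases step (ii) for a pair $h_1,h_2$ with $h_1\Lambda=h_2\Lambda$ (so the short vector lands in $h_1\Lambda$ rather than $\Lambda$), and obtains the basis bound directly from a theorem in Gruber--Lekkerkerker rather than via Minkowski's second theorem plus Mahler.
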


\begin{proof}
By \cite[Theorem 4, p 72]{GK}\footnote{Theorem 4 in \cite{GK}
is stated in terms of the Eucliean norm but of course the result is also
valid for the sup norm, with a suitable adjustment of the constant.},
there is a constant $b'$ depending only on $n$ and $m$ such that
for any unimodular lattice $\Lambda \subset \R^{n+m}$
we may find a basis $(\vv_i)_{i=1}^{n+m}$ for $\Lambda$
with $\Pi_{i=1}^{n+m}\|\vv_i\| \le b' $.
If we assume that $\Lambda \notin U_\delta$, then this gives
$$\|\vv_k\| \le \frac{b'}{\Pi_{i \neq k} \|\vv_i\|} \le b' \delta^{-(n+m-1)}$$
for each $1 \le k \le m +n$.

\ignore{
Recall that the $k$th successive minimum $\lambda_k$ of $\Lambda$ is defined as the greatest lower bound of the numbers $R$ such that there exist $k$ linearly independent vectors of $\Lambda$ of length at most $R$. Equivalently, if $\Lambda = A \Z^d$ for some $A \in G$
and $K = A^{-1}B(\mathbf{0}, 1)$, we 
may take $\lambda_k$ to be the infimum of
the numbers $R$ such that there exist $k$ linearly independent vectors in $\Z^d \cap RK$.
According to a theorem of Minkowski, see e.g.\  
\cite[Theorem 1, p. 59]{GL}, 
$\prod_{k=1}^{\m+\n} \lambda_k \le 2^n/\vol\big(A^{-1} B(\mathbf{0}, 1)\big) = 1$, 
so since $\Lambda \notin U_\delta$ we have
$$
\lambda_{\m+\n} \le \frac1{\prod_{k=1}^{\m+\n-1} \lambda_k} \le \delta^{-(\m+\n-1)}.
$$
Thus, since ______, we may choose a basis $(\vv_i)_{i=1}^{\m+\n }$ of $\Lambda$
with $\|\vv_i \| \le  \delta^{-(\m+\n -1)}$
for each $1 \le i \le \m+\n $.
}

Now if $h_1,h_2\in G$ are such that $\|h_i - I_{\m+\n}\|_{op} < \frac{1}{4b'}\delta^{\m+\n }$ for $i = 1,2$
($\|\cdot\|_{op} $ here refers to the %sup norm on $\R^{n+m}$
operator norm), 
then for each $1 \le k \le \m + \n$ one has $h_i \vv_k = \vv_k + \vu_i$, where 
$$\|\vu_i\| \le \|h_i - I_{\m+\n }\|_{op}  \cdot \|\vv_k\| \le  \frac1{4b'} \delta^{\m+\n } b'\delta^{-(\m+\n -1)}
									= \frac14\delta.$$
If $h_1\Lambda = h_2\Lambda$ then $h_1\vv_k - h_2\vv_k = \vu_1 - \vu_2 \in h_1\Lambda$.
But $\|\vu_1 - \vu_2\| \le \delta/2$, so, since $h_1\Lambda \notin U_{2\delta/3}$, we have a contradiction.
It remains only to observe that our distance function on $G$ satisfies
$\dist(h, I_{n+m}) \ge b'' \|h - I_{n+m}\|_{op} $, so choosing $b =\frac{b''}{4b'}$
completes the proof.
\end{proof} 

\ignore{
Choose a unit vector $\vu$ orthogonal to $\vv_1$ such that
$\alpha' \vv_1 + \beta' \vu \in \Lambda$ for some $\alpha', \beta' \in \R$ with $\beta' \neq 0$.
Since for any $\eta > 0$ there exists $q > 1/\eta$ with $\dist(q\alpha',\Z) < \frac{1}{q}$,
we may find $\vv_2 =  \alpha \vv_1 + \beta\vu \in \Lambda$ with $\alpha < \frac{3}{4}$.
Since $\|\vv_2\| \ge \delta$, we must have $\beta \ge \sqrt{\delta^2 - \frac34\delta^2} = \frac12\delta.
}

\ignore{
Now, we may replace $\vv_2$ with $\vv_2 - j\vv_1$, where 
$\|j\vv_1 - \proj_{\vv_1} \vv_2\| \le \|\vv_1\|$,
so we can assume $\vv_2 = a \vv_1 + \ell_1 \vu_1$, where $\|\vu_1\| = 1$, $0 \le a < 1$,
and $\vv_1\cdot \vu_1 = 0$. In general, for each $1 \le k \le m+n -1$, 
we can, by replacing $\vv_{k+1}$ if necessary, assume
that $$\vv_{k+1} = a_1\vv_1 + \dots + a_k\vv_{k} + \ell_k\vu_k,$$
where $0 \le a_i < 1$ and $\vv_i \cdot \vu_k = 0$ for each $1 \le i \le k$.
Thus, we have $\|\vv_{m+n}\| \le (m+n-1)\|\vv_1\| + \ell_1 + \dots + \ell_{m+n-1}$
and $1 = \covol{\Lambda} = \|\vv_1\|\ell_1\ell_2\dots\ell_{m+n-1}$.
Furthermore, since 
}

\ignore{
We may assume without loss of generality, assume that $\|\vv_i\| \le \|\vv_{i+1}\|$
and $\|\vv_1\| \le $. (If all elements of $\Lambda$ are larger than $...$, then it is not hard to see
that $\Lambda$ is not unimodular.) For each $1 \le i \le n+m$
and each $\vu_1,\dots, \vu_i \in \R^{n+m}$, denote by $V(\vu_1,\dots,\vu_i)$
the $i$-dimensional volume of the parallelotope spanned by $\vu_1,\dots,\vu_i$.
For each $1 \le i \le n+m$ replace $\vv_i$ with the 
smallest vector in $\spn(\vv_1,\dots,\vv_i) \smallsetminus \spn(\vv_1,\dots,\vv_{i-1})$.
We then have that $\|\proj_{\spn(\vv_1,\dots,\vv_{i-1})} \vv_i\| \le \sqrt{(i-1)\delta/2}$,
so that $V(\vv_1,\dots,\vv_i) \ge V(\vv_1,\dots,\vv_{i-1})\cdot \sqrt{\|\vv_i\|^2 - (i-1)\delta/2}$.
If $\|\vv_{n+m}\| \ge b \cdot \delta^{-(n+m-1)}$, then applying the above repeatedly yields
$$1 = V(\vv_1,\dots,\vv_{n+m}) \ge \delta \cdot b \cdot \delta^{-(n+m-1)}$$
}

We are now ready to prove Theorem \ref{upper}.

\begin{proof}[Proof of Theorem \ref{upper}]
Let $\vre = c^{\frac{1}{n+m}}$ and let $\delta = \vre/2$.
Note that by suitable change of the constant $k_2$, it suffices to prove
the statement for sufficiently small $c$, so assume
without loss of generality that
$c$ is small enough for Proposition \ref{expdecaysets} to hold and also small enough that
 \eq{def r}{r \df b\delta^{\m +\n } < 1/2\,,} where $b$ is as in Lemma \ref{injrad}.
Let
%DK
$B = B({\bold 0}, r/2)$ be the box of sidelength $r$ centered at $\bold 0\in\mr$;
%DK
%Because $\Bad_{\m ,\n }(c)$ is invariant under translation by integer vectors, it suffices to 
we will estimate the dimension of $\big( \Bad_{\m ,\n }(c) - A_0\big)\cap B$ for an arbitrary $A_0\in\mr$.  Note that in view of Lemma \ref{danicorr}, 
$$ \Bad_{\m ,\n }(c) - A_0 = \{A\in\mr : g_t u_{A+A_0}\Z^{\m+\n} \notin U_\vre\ \forall\, t \ge 0\}.
$$
 
Now  by our choice of $\delta$ for any $\Lambda \notin U_\delta$ condition \equ{inj} will hold, 
so one can apply Proposition  \ref{expdecaysets} to get
\begin{equation*}
%\begin{split}
\nu\big(\mathcal{A}(B,t,\delta,\Lambda)\big) %&
\ge  Dr^{\m \n }\delta^{\m +\n } -  E' e^{-\lambda
t}\,. %\\ %&= Db^{mn}\delta^{(mn+1)(m+n)} -  E' b^{-a} \delta^{-a(m+n)}e^{-\lambda t}\,.
%\end{split}\
\end{equation*}

Fix $t > 0$, let ${N} \df \left(\lfloor e^{\frac{(\m +\n )}{\m \n}t}\rfloor\right)^{\m\n}$, and break $B$ into 
$N$ cubes $B'$ of side length $r{N} ^{-1/\m \n }$. Note that for $t \ge \m\n$,
we have $N^{\frac{1}{\m\n}} \ge \frac12 e^{\frac{(\m +\n )}{\m\n}t}$, so
$$
%\frac12re^{-\frac{m+n}{mn}t}\le 
r{N} ^{-1/\m \n } \le 2re^{-\frac{\m +\n }{\m \n }t}.$$ 
%Note that conjugation by $g_t$ sends each of them to a cube of side length $r$. 
%By our choice of $\delta$, we may apply Proposition  \ref{expdecaysets}
%with $B$, $t$, $\Lambda$, and $\delta$ as above. 
To estimate the number of subcubes of $B$ which intersect $\Bad_{\m ,\n }(c) - A_0$,
we need the following observation: if $A\in \mathcal{A}(B,t,\delta,\Lambda)$ (that is, $g_tu_A\Lambda \in U_\delta$), and 
one of the subcubes $B' \subset B$ contains $A$, then for any $A'\in B'$, 
\begin{equation}
\label{conjugate}
g_tu_{A'}\Lambda = g_tu_{A'-A}u_A\Lambda = (g_tu_{A'-A}g_{-t})g_tu_A\Lambda\,.
\end{equation}
Now, a straightforward calculation shows that
$${g_tu_{A'-A}g_{-t} = 
\begin{pmatrix}
   I_\m  & (A'-A)e^{\frac{\m+\n }{\m\n }t} \\
    0 & I_\n 
       \end{pmatrix}
}\,,
$$
so since $A$ and $A'$ lie in the same cube of side length $r{N} ^{-1/\n \m } \le 2re^{-\frac{\m +\n }{\m \n }t}$, 
we have
$\dist(g_t u_{A'-A}g_{-t}, I_{\m+\n }) \le 2r$.
Hence, for small enough $\delta > 0$ and any $\vv \in \Lambda$ we have
$$\|g_tu_{A'}\vv\| = \|(g_tu_{A'-A}g_{-t})g_tu_A\vv\| \le \|g_tu_A\vv\|(1+2r)
			\le \delta(1+2b\delta^{\m+\n }) \le 2\delta =\vre.$$
That is, $g_tu_{A'}\Lambda$ belongs to %the complement of 
a slightly larger  set $U_\vre$, 
so we have proved that $B'\subset \mathcal{A}(B,t,\vre,\Lambda)$. Thus the measure of the union of subcubes $B'$ completely contained in $\mathcal{A}(B,t,\vre,\Lambda)$ is not less than the measure of $\mathcal{A}(B,t,\delta,\Lambda)$, so the number of those subcubes is at least 
$$
\frac{Dr^{\m \n }(\vre/2)^{\m +\n } - E' e^{-\lambda' t}}{r^{\m \n }/{N} } 
= {N}  \big(D'\vre^{\m +\n }- E' r^{-\m \n }e^{ -\lambda' t}\big)\,.
$$
Taking $\Lambda = u_{A_0}\Z^{\m +\n }$, 
%DK
which clearly does not belong to $U_\delta$, we see that the rest of the subcubes form a cover of the intersection of $B$ with 
$ \Bad_{\m ,\n }(c) - A_0$, 
%where $c$ and $\vre$ are related as in Lemma \ref{danicorr}, 
and the number of elements of the cover is at most 
%$\alpha(r,\vre,t)$, where $\alpha(r,\vre,t)$ is defined to be
$$
{N} \big( 1 + E' r^{-\m \n }e^{-\lambda' t} - D'\vre^{\m +\n }\big)\,.$$
But now observe that by construction, for  any cube $B'$ from the above cover there exists  $A'\in B'$ such that $g_tu_{A'}\Lambda\notin U_\vre$. 
It follows that $g_tu_{A''}\Lambda \notin U_\delta$ for every $A'' \in B'$, since otherwise
by the computation above, we would have $g_tu_{A'}\Lambda\in U_\vre$.
In particular, denoting by $A_1$ the center of  $ B'$, we have
$g_tu_{A_1}\Lambda\notin U_\delta$.
Now note that conjugation by $g_t$ sends the cube of side length $r$ centered at the origin
to the cube of side length $e^{\frac{\m+\n }{\m\n }t}r$, so again using (\ref{conjugate})
we have
$$\{g_tu_{A'}\Lambda: A'\in B'\} = \{u_{A}g_tu_{A_1}\Lambda: A\in B\}\,.$$
Thus one can apply the same procedure to $g_tu_{A_1}\Lambda$ in place of $\Lambda$, 
getting a subdivision of $B$ into ${N} ^2$ cubes of side length $r{N} ^{-2/{\m \n }}$, and then cover $\big( \Bad_{\m ,\n }(c) - A_0\big)\cap B$ by ${N} ^2\left( 1 + E' r^{-\m \n }e^{-\lambda' t} - D'\vre^{\m +\n }\right)^2$ 
of those subcubes. Continuing inductively, we effectively embed $\big( \Bad_{\m ,\n }(c) - A_0\big)\cap B$ into a Cantor-like set, and therefore can conclude that
\begin{equation}
\label{generalupper}
\begin{split}\dim\big( \Bad_{\m ,\n }(c) - A_0\big)\cap B &\le \lim_{j\to\infty}\frac{j\log\left({N} (1 + E' r^{-\m \n }e^{-\lambda' t} - D'\vre^{\m +\n })\right)}{-\log(r{N} ^{-j/\m \n })}  \\
 = &\,\frac{\log\left({N} (1 + E' r^{-\m \n }e^{-\lambda' t} - D'\vre^{\m +\n })\right)}{-\log({N} ^{-1/\m \n })} \\
 \underset{\equ{def r}}{=} &\m \n  \left(1 + \frac{\log\left(1 + E''\vre^{-(\m +\n )\m \n }e^{-\lambda'
t} - D'\vre^{\m +\n }\right)}{(\m +\n )t}\right)\,.\end{split}\end{equation}

Note that this holds for all $t > \m\n$, so choose 
$t$ such that 
\eq{define t}{E''\vre^{-(\m +\n )\m \n }e^{-\lambda'
t} = \frac{D'\vre^{\m +\n }}2\,.
} (For small $\vre > 0$, this choice will satisfy $t > \m\n$.)
Then the right-hand side of (\ref{generalupper}) becomes
$$\m \n  \left(1 + \frac{\log\left(1 - \frac{D'}{2}\vre^{\m +\n }\right)}{(\m +\n )t}\right).$$
Now, for small enough $\vre$, $ \log\left( 1 - \frac{D'}2\vre^{\m +\n }\right) \le - C_1\cdot\vre^{\m +\n } = - C_1\cdot c $. Solving \equ{define t} for $t$, one gets $$t = \frac{1}{\lambda'}\left(\log\frac{2E''}{D'} + (\m +\n )(\m \n +1)\log \frac1\vre\right)\,,$$
which can be bounded by $C_2\log \frac1c$.
The desired estimate follows.
\end{proof}

\ignore{Now to write the best estimate it suffices to find $t$ which minimizes
$$
\varphi(t) \df  \frac{\log\left(1 + E''\vre^{-(m+n)(mn+a)}e^{-\lambda
t} - D'\vre^{m+n}\right)}{t}\,.
$$
I am too lazy to do this \textcolor{red}{(you are welcome to try!)} so let me simply produce some estimate, perhaps not optimal (although perhaps quite close to optimal). 

\begin{thm}
\label{upperestba} For any $m,n\in\N$, one has $$\codim\big(  Bad_{m, n}(c)\big) \ge \const(m,n)\cdot \frac{c^m}{\log(1/c)}\,.$$
\end{thm}
}

\bibliographystyle{alpha}

\end{document}